 \def\R{\mathbb{R}}
\newcommand{\D}{\displaystyle}
\headsep \linespread{2.0}
\newtheorem{thm}{Theorem}[section]
\newtheorem{lemma}[thm]{Lemma}
\newcommand{\cred}{\color{black}}
\begin{document}
\title{\bf A kinetic chemotaxis model with internal states and temporal sensing}
\author{Zhi-An Wang \thanks{Department of Applied Mathematics, The Hong Kong Polytechnic University, Hung Hom, Kowloon, Hong Kong ({\tt mawza@polyu.edu.hk}). }}
\date{}
\maketitle
\begin{quote}
\textbf{Abstract}: By employing the Fourier transform to derive key {\it a priori} estimates for the temporal gradient of the chemical signal, we establish the existence of global solutions and hydrodynamic limit of a chemotactic kinetic model with internal states and temporal gradient {  in one dimension}, which is a system of two transport equations coupled to a parabolic equation proposed in \cite{DS}.

\textbf{Key words}: Kinetic chemotaxis model, internal states, temporal gradient, global solutions, hyperbolic limits

\textbf{AMS subject classification}: 35M30, 35R09, 45K05, 92C17
\end{quote}

\section{Introduction}
\setcounter{equation}{0}
\renewcommand{\theequation}{\thesection.\arabic{equation}}
The mathematical models of chemotaxis were generally constructed at two scales of interest:  population (macroscopic) or cellular (microscopic) scale.
The prototype of the population-based chemotaxis model was  proposed by Keller-Segel in the 1970s \cite{KS1} to describe the aggregation of cellular slime molds {\it Dictyostelium discoideum} in response to the chemical cyclic adenosine monophosphate (cAMP). The first microscopic description of chemotaxis model was due to Patlak \cite{Patlak} where the kinetic
theory was used to express the chemotactic velocity in term of the
average of the velocities and run times of individual cells. This approach was essentially developed in  \cite{Stroock1974, Alt1, ODA} using a
velocity-jump process assuming that cells run with some velocity
and at random instants of time they changes velocities (directions)
according to a Poisson process with the intensity $\lambda$.  The
governing evolution equation for the simplest version of this
process reads
\begin{equation}\label{eq1a3}
\frac{\partial }{\partial t}p(t,x,v)+v\cdot \nabla p(t,x,v)=-\lambda
p(t,x,v)+\lambda \int_V \mathcal{K}[S](v',v)p(t,x,v')dv',
\end{equation}
where $p(t,x,v)$ denotes the density of particles at position $x \in
\R^N (N\geq 1)$, moving with velocity {  $v \in V$} at time $t \geq
0$ \cite{ODA} and $V$ is a symmetric compact set in  $\R^N$.  Here $\lambda$ is called the turning frequency and
$1/\lambda$ is a measure of the mean run length between velocity
jumps. The kernel function $\mathcal{K}[S](v',v)$ is the density
distribution function of a velocity jump from $v'$ to $v$ if a jump
occurs, which is a function of the chemical concentration $S(t,x)$. Generally speaking both
turning frequency $\lambda$ and turning kernel may also depend on
internal or external variables.

The  microscopic models of chemotaxis can incorporate the individual cell properties which may be passed to the macroscopic models via appropriate spatial/temporal scalings. When the kernel function $\mathcal{K}[S]$ depends on the chemical concentration or its spatial derivative and $S(t,x)$ satisfies some appropriate dynamical equation,  the kinetic system \eqref{eq1a3} has been extensively studied in the literature with focuses on the global well-posedness (cf. \cite{CMPS,EH,HKS1,HKS2}) and macroscopic limits (see \cite{OH,EO1,Xue1,STY, PTV} for formal derivation and \cite{CMPS,HKS1,HKS2,JV, Liao, ENV} for rigorous justification). In this paper, we consider following kinetic equation with internal dynamics proposed in  \cite{ODA}
\begin{equation}\label{eq1a8}
\frac{\partial p}{\partial t}+v\cdot \nabla p+\nabla_{\xi}\cdot(\eta
p)=-\lambda(S) p(t,x,v,{\cred \xi})+\int_V \lambda(S)\mathcal{K}[S](v',v)p(t,x,v',{\cred \xi})dv',
\end{equation}
where $\xi \in Z \subset \R^m (m\geq 1)$ denotes the internal variable
which evolves according to the equation
\begin{equation*}\label{eq1a4}
\frac{d\xi}{dt}=\eta(\xi,S(t,x(t)))
\end{equation*}
with $\eta(\cdot,S): Z \to \R$ being a function describing the signal transduction and $S(t,x)$ denoting the
concentration of chemical signal and $x(t)$ is the cell moving path. Here the internal dynamics of cells is included through {\cred $\xi$} and the chemical signal is incorporated into the turning frequency. The kernel function is a non-negative and satisfies the normalization condition
\begin{equation}\label{eq1a9}
\int_V \mathcal{K}[S](v',v)dv=1.
\end{equation}
To write (\ref{eq1a8}) in a compact form, we introduce a notation
\begin{equation}\label{eq1a10}
T[S](v',v)=\lambda(S)\mathcal{K}[S](v',v)
\end{equation}
which is called the turning kernel. Then equation (\ref{eq1a8}) can be rewritten as a compact form
\begin{equation}\label{eq1a11}
\frac{\partial p}{\partial t}+v\cdot \nabla p+\nabla_{\xi}(\eta
p)=\int_V (T[S]p'-T^*[S]p)dv',
\end{equation}
where the abbreviation $p'=p(t,x,v',\xi), T[S]=T[S](v',v)$, $T^*[S]=T[S](v,v')$ and the
intensity $\lambda$ of the Poisson process is thus given by from (\ref{eq1a9})-(\ref{eq1a10})
\begin{equation*}
\lambda(S)= \int_V T^*[S]dv'.
\end{equation*}
Then (\ref{eq1a11}) becomes an equation same as the one considered in
paper \cite{DS}, {\cred where the turning kernel $T[S]$ is assumed to be independent of $\xi$ (i.e. $\xi$ has no influence on cell movement). When $T[S]$ replies on the internal variable $\xi$, there are some results available as follows}.  When the signal response function $\eta$ has some stiffness, the macroscopic equation of Keller-Segel type as a parabolic limit of \eqref{eq1a11} was derived in \cite{PTV} and the global existence of solutions of \eqref{eq1a11} coupled to an elliptic equation for the chemical signal $S$ was proved in \cite{Liao}. The formal macroscopic limit of \eqref{eq1a11} with certain specific internal dynamics was previously derived in \cite{EO1, EO2}. In this paper, we shall consider another approach illustrated in \cite{DS} by considering the averaging effect of {  {  internal} dynamics to derive the dynamics of internal variable instead of a given dynamics as recalled above, and investigate the global existence of solutions to the resulting equations. To make our presentation self-contained, we shall briefly recall some derivations shown in \cite{DS} below.

Define $f$ and $\rho$ as
\begin{equation*}\label{eq1a16}
f(t,x,v)=\int_Z p(t,x,v,\xi)d\xi, \ \rho(t,x)=\int_V f(t,x,v) dv.
\end{equation*}
That is $f$ is the position-velocity density of cells and $\rho$ is the total density of cells over all velocities.
Then the average value of the internal variables is defined by
\begin{equation*}\label{eq1a12}
z(t,x)=\frac{1}{\rho} \int_V \int_Z \xi p(t,x,v,\xi)d\xi dv.
\end{equation*}
Here we assume the boundary condition $p(t,x,v,\xi)=0$ for $\xi \in
\partial Z$ {\cred where $\partial Z$ denotes the boundary  of $Z$}, and two moment closure assumptions
\begin{equation}\label{eq1a13}
\int_V\int_Z \xi v pd\xi dv=z \int_V vfdv, \ \int_V\int_Z \eta pd\xi
dv=\rho \bar{\eta}(z,S).
\end{equation}
{\cred The first closure assumption in \eqref{eq1a13} means that the variables $v$ and $\xi$ are uncorrelated (namely the internal variable $\xi$ has no influence on cell movement) and it can be fulfilled, say, for $p(t,x,v,\xi)=f(t,x,v)\tilde{p}(t,x,\xi)$. The second closure assumption in \eqref{eq1a13} depends on the form of {${\eta}(z,S)$} (an example will be discussed in section 2).} Upon an integration of  (\ref{eq1a11}), the following equations are obtained
\begin{eqnarray}
&\D f_t+ v\cdot \nabla f=\int_V
(T[S]f'-T^*[S]f)dv',\label{eq1a14}\\
&\D (\rho z)_t +\nabla \cdot \bigg(z \int_V v fdv \bigg)=\rho
\bar{\eta} (z,S). \label{eq1a15}
\end{eqnarray}
{\cred More detailed explanations of \eqref{eq1a13} and derivation of \eqref{eq1a14}-\eqref{eq1a15} are referred to \cite{DS}}.   {We remark here that \eqref{eq1a14} is weakly coupled to \eqref{eq1a15} in the sense that $S$ depends on the internal variable $z$ (see \eqref{eq2a9} in section 2).} A typical form of the turning kernel $T$ {  depending} on spatial-temporal gradient of the chemical signal is  (cf. \cite{DS})
\begin{equation}\label{eq1a17}
T[S]=\varphi(S_t+v\cdot \nabla S),
\end{equation}
where $\varphi: \R \to \R$ is a smooth monotonically decreasing function.

For given chemical concentration $S(t,x)$, when the turning kernel function in \eqref{eq1a17} satisfies
\begin{equation}\label{eq1a18}
0<\alpha \leq \varphi \leq \beta
\end{equation}
with two constants $\alpha, \beta >0$, the hydrodynamic limit of equations (\ref{eq1a14})-(\ref{eq1a15}) was derived in \cite{DS}. {\color{black} When (\ref{eq1a14}) is coupled to a reaction-diffusion equation for $S$
\begin{equation}\label{eqnS}
\tau S_t=\Delta S+\rho-S
\end{equation}
with $\tau=\{0,1\}$, and the turning kernel $T[S]$ depends on $S_t$ implicitly (meaning that the bound of $T[S]$ does not depend on  $|S_t|$), the global existence and parabolic limit of solutions were obtained in a series of works \cite{HKS2, HKS1, HKS3} for $x\in \R^N (1\leq N\leq 3$). When $\varphi$ in \eqref{eq1a17} satisfies $\varphi \in C^1(\R)\cap L^\infty(\R)$ and $\varphi'<0$, {\cred the global existence and numerical simulation of solutions were investigated in \cite{NV1}}, and the so-called flux-limited Keller-Segel model was formally derived in \cite{SCB-PNAS} and justified in \cite{PVW} where the  global dynamics of the resulting  flux-limited Keller-Segel model was also studied. However, when $T[S]$ explicitly depends on $|S_t|$ (i.e. the bound of $T[S]$ depends on $|S_t|$), the only result available so far is the global existence of solutions to (\ref{eq1a14}) coupled with \eqref{eqnS} with $\tau=0$ in $\R$ established in \cite{EH}.  For the kinetic chemotaxis model with internal state and turning kernel explicitly depending on the temporal gradient $S_t$, no results seem to be available as  we know.   The purpose of this paper is two folds: (1) establish the existence of global solutions of the model (\ref{eq1a14})-(\ref{eq1a15}) in $\R$ with a more general turning kernel explicitly depending on the temporal gradient $S_t$ where $S$ satisfies a reaction-diffusion equation derived in \cite{DS} (see section 2); (2) relax the condition \eqref{eq1a18} imposed in \cite{DS} and rigorously derive the hydrodynamic limit of (\ref{eq1a14})-(\ref{eq1a15}).}

The rest of this paper is organized as follows. In section 2, we briefly recall the derivation of  the equation for the chemical concentration $S$  shown in \cite{DS, EO1} and state our main results. In section 3, we present some preliminary results. In section 4, we show the local and global existence of solutions.  Finally we discuss the hydrodynamic limit of the model in section 5.

\section{Model review and main results}
\setcounter{equation}{0}
\renewcommand{\theequation}{\thesection.\arabic{equation}}
To complete the statement of our problem, we briefly review the function $\eta(z,S)$ and  dynamical equation for the chemical signal $S$ derived in \cite{DS,EO1}. Then we propose the condition on the turning kernel $T[S]$ and state our main result.

In general the dynamics of the chemical concentration $S$  follows a reaction-diffusion {  equation}
\begin{equation}\label{eq2a1}
\frac{\partial S}{\partial t}=\Delta S+\phi(S, z, \rho),
\end{equation}
where  $\phi(S,z,\rho)$ describes the production and degradation of
the chemical signal in dependence on the cell density and the internal
state of the cells.

For internal dynamics, we employ the model studied in \cite{DS, EO1, EO2}. This model assumes that the
internal states $\xi=(\xi_1, \xi_2)$ satisfies a Cartoon model
\begin{equation}\label{eq1a6}
\frac{d\xi_1}{d\tau}=\frac{g(S(t,x))-(\xi_1+\xi_2)}{\tau_e},\
\frac{d\xi_2}{d\tau}=\frac{g(S(t,x))-\xi_2}{\tau_a},
\end{equation}
where $\tau_{e}$ and $\tau_{a}$ are respective excitation and adaptation time scales in the signal transduction,
and the production of the chemical signal is triggered by the first internal variable. Then the reaction term $\phi(S,z,\rho)$ takes the form
\begin{equation}\label{eq2a2}
\phi(S,z,\rho)=\rho w-\Psi(S),
\end{equation}
where $w$ denotes the concentration of the first state $\xi_1$ and
the function $-\Psi(S)$ models the degradation of $S$, which has the form
after rescaling (see \cite{DS})
\begin{equation}\label{eq2a3}
\Psi(S)=S(1+S).
\end{equation}
In general it is reasonable  (e.g. see \cite{EO1}) to assume that $\tau_e <<\tau_a$ in (\ref{eq1a6}) due
to the fast excitation and slow adaptation of signaling process. Effectively we may assume that $\tau_e=0$.
Then from the first equation of (\ref{eq1a6}), we have that
\begin{equation*}
\xi_1=g(S)-\xi_2,
\end{equation*}
which implies {  that} the concentration of the first internal variable can
be represented by the concentration of the second internal variable.
Therefore we  focus on the second internal chemical $\xi_2$ and
hence $z \approx \xi_2$. Consequently from the moment closure
assumption (\ref{eq1a13}) and internal dynamic equation
(\ref{eq1a6}), we deduce that
\begin{equation}\label{eq2a4}
\bar{\eta}(z,S)=\eta(z,S)=\frac{{\cred g(S(t,x))}-z}{\tau_a}
\end{equation}
and the concentration $w$ of the first state variable $\xi_1$ is chosen as
\begin{equation*}\label{eq2a5}
w=(g(S)-z)+
\end{equation*}
where the notation $q_+$ denotes the positive part of $q$.
The function $g\geq 0$ describes the mechanism of signal transduction and we
assume that
\begin{equation}\label{eq2a6}
g \in C^1([0,\infty)) \ \text{and} \ g \ \text{is bounded for any}\ S\geq 0.
\end{equation}
Typically $g(S)$ vanishes at zero, is monotone increasing and saturates for large $S$.
A suitable representation of $g$ should depend on the fraction of receptors occupied. A prototypical  form is
\begin{equation*}
g(S)=\Phi \bigg( \frac{S}{K_D+S} \bigg)
\end{equation*}
with some continuous function $\Phi$, where $K_D$ denotes the
dissociation rate for the chemical signal \cite{BSB}. A typical
choice (e.g. see \cite{DS}) is that $\Phi={\rm identity}$, i.e.,
$g(S)=\frac{S}{K_D+S}$. In general the turning kernel depends on $S, S_t$ and $\nabla S$ and hence denoted by $T[S, S_t, \nabla S]$.  For brevity, in what follows, we use the short form $T[S]$ to denote $T[S,S_t,\nabla S]$ if there is no confusion caused. {  Without loss of generality we assume $\tau_a=1$ in the sequel.}

Then substituting (\ref{eq2a2}) and (\ref{eq2a4}) into (\ref{eq2a1})
and coupling the resulting equation with (\ref{eq1a14})-(\ref{eq1a15}) lead to the following {  one-dimensional} system
\begin{eqnarray}
f_t+ v\cdot f_x &=&\int_V (T[S]f'-T^*[S]f)dv',\label{eq2a7}\\
(\rho z)_t + (z j)_x &=&\rho g(S)-\rho z, \label{eq2a8} \\
S_t&=&S_{xx}+g(S)-z\rho-S(1+S) \label{eq2a9}
\end{eqnarray}
for {\color{black} $(x,v) \in \R\times V$ where $V\subset \R$ is a bounded interval} and
\begin{equation}\label{flux}
j(t,x)=\int_V vf(t,x,v)dv
\end{equation}
 denotes the cell density flux.
To complete the statement of the problem, we impose the following initial conditions
\begin{equation}\label{eq2a10}
f|_{t=0}=f_0, \ z|_{t=0}=z_0, \ S|_{t=0}=S_0.
\end{equation}
Due to the biological relevance, we assume $f_0, z_0, S_0$ are all non-negative. The main result of this paper is the global
existence of solutions to the one-dimensional system (\ref{eq2a7})-(\ref{eq2a10}), where the turning kernel fulfills the following structural assumption:

\begin{itemize}
\item[(H)] The turning kernel $T[S]$ is a Lipschitz continuous function satisfying the growth condition
\begin{eqnarray}\label{kernel}
0\leq T[S] \leq C_0(1+\|S\|_{W^{1, \infty}(\R)}+\|S_t\|_{L^\infty(\R)})
\end{eqnarray}
with some constant $C_0>0$.
\end{itemize}
The assumption (H) entails that the boundedness of turning kernel depends on the magnitude of $S, S_x$ and $S_t$, instead of being uniformly  bounded as assumed in \eqref{eq1a18} and in \cite{PVW}. A typical example is $T[S]=\lambda_0+\sigma (S_t+v  S_x)$ with positive constant $\lambda_0$ and non-zero constant $\sigma$, see \cite{Ford:1991:MBR2}.
\bigbreak

The main result of this paper is the global existence of solutions of (\ref{eq2a7})- (\ref{eq2a10}), which is stated in the following theorem.
\begin{thm}\label{mth}
Let the structure condition (H) and assumption (\ref{eq2a6}) hold.
Assume that $f_0 \in L^1\cap L^{\infty}(\R \times V), z_0 \in
L^{\infty}(\R)$ and $S_0\in  W^{2,\infty}(\R)$. Then the system
(\ref{eq2a7})- (\ref{eq2a10}) has a global solution $(f,z,S)$ satisfying for all $ 1\leq p\leq \infty$
\begin{equation}\label{eq4a34}
f\in L^{\infty}([0,\infty);L^p(\R \times V)),
\end{equation}
\begin{equation}\label{eq4a35}
z \in L^{\infty}([0,\infty); L^{\infty}(\R)),
\end{equation}
\begin{equation}\label{eq4a36}
S \in
L^{\infty}([0,\infty); W^{2,\infty}(\R)) \cap C([0,\infty); L^p(\R)).
\end{equation}
such that
$$\|z\|_{L^\infty(\R)}+\|S\|_{L^\infty(\R)}+\|S\|_{W^{1,q}(\R)}\leq C, \ 1\leq q<\infty$$
holds for some constant $C>0$ independent of $t$.
\end{thm}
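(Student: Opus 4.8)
The plan is to prove Theorem \ref{mth} by the standard two-step scheme: first construct a unique local-in-time solution by a fixed-point argument, and then derive a priori bounds that both prevent blow-up in finite time and are uniform in $t$, so that the local solution extends to a global one. For the local theory I would set up the solution map $\Gamma$ sending a given signal $S$ in a ball of $C([0,T_0];W^{2,\infty}(\R))$ with $S_t\in C([0,T_0];L^\infty(\R))$ — so that (H) makes $T[S]$ bounded — to the triple obtained by solving the linear transport equation \eqref{eq2a7} for $f$ along characteristics, then \eqref{eq2a8} for $z$, and finally the semilinear heat equation \eqref{eq2a9} for a new signal $\tilde S$. A contraction estimate on a short interval $[0,T_0]$, using Lipschitz continuity of $T[S]$ and of $g$, yields local existence and uniqueness with $f\in L^1\cap L^\infty$, $z\in L^\infty$, $S\in W^{2,\infty}$ and $S_t\in L^\infty$, together with a continuation criterion phrased in these norms.

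For the global extension I would first collect the inexpensive a priori estimates. Nonnegativity of $f$ and $z$ is propagated because the loss term in \eqref{eq2a7} is linear in $f$ and the gain term is nonnegative, and because the zeroth-order terms in \eqref{eq2a8} have the right sign; integrating \eqref{eq2a7} in $v$ and using the normalization \eqref{eq1a9} gives the local conservation law $\rho_t+j_x=0$, hence $\|f(t)\|_{L^1(\R\times V)}=\|f_0\|_{L^1(\R\times V)}$ for all $t$. An upper bound for $S$ that is uniform in time follows from comparison applied to \eqref{eq2a9}: the production $g(S)$ is bounded by \eqref{eq2a6}, the coupling $-z\rho$ has the favorable sign, and the quadratic absorption $-S(1+S)$ forces $S\leq C$. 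Writing \eqref{eq2a8} in nonconservative form $z_t+(j/\rho)z_x=g(S)-z$, with transport speed $|j/\rho|\leq\max_{v\in V}|v|<\infty$, and integrating along characteristics gives $\|z(t)\|_{L^\infty(\R)}\leq\max\{\|z_0\|_{L^\infty},\|g\|_{L^\infty}\}$. In particular the sources $z\rho$ and $zj$ are bounded in $L^1(\R)$ uniformly in $t$ by mass conservation.

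The heart of the proof, and the step I expect to be hardest, is an a priori bound on the temporal gradient $S_t$, because (H) controls $T[S]$ through $\|S_t\|_{L^\infty(\R)}$ while every natural estimate for $f$ (hence for $\rho$, hence for the source $z\rho$ in \eqref{eq2a9}) already requires $\|T[S]\|_{L^\infty}$ — a genuine feedback loop that must be closed using only the $t$-uniform $L^1(\R)$ control of the sources obtained above. To this end I would differentiate \eqref{eq2a9} in $t$ and substitute \eqref{eq2a8} for $(z\rho)_t$, so that $w:=S_t$ solves $w_t=w_{xx}+[g'(S)-1-2S]w+(zj)_x-\rho(g(S)-z)$ with a bounded coefficient and with sources $(zj)_x$ and $\rho(g(S)-z)$ that are merely in $L^1(\R)$. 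Passing to the Fourier transform in $x$, a Duhamel/energy estimate first yields $S_t\in L^2(\R)$: the zeroth-order source is harmless, the divergence-form source is controlled because $\|\partial_x G_\tau\|_{L^2(\R)}\sim\tau^{-3/4}$ is integrable at $\tau=0$, and the coefficient term is absorbed by Gronwall. The delicate point is the upgrade from $L^2$ to $L^\infty$: writing $\|S_t\|_{L^\infty(\R)}\leq(2\pi)^{-1}\|\widehat{S_t}\|_{L^1(\R)}$ and inserting the Duhamel formula, the contribution $\int_0^t e^{-\xi^2(t-s)}i\xi\,\widehat{zj}(\xi,s)\,ds$ is only logarithmically non-integrable in $\xi$ for $L^1$ data. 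I would close it by a low/high frequency splitting in which the high frequencies are dominated by the $L^2$ bound just obtained while the low frequencies are elementary, the parabolic factor $e^{-\xi^2(t-s)}$ and the $W^{2,\infty}$ regularity of the datum supplying the remaining decay. This Fourier estimate for $S_t$ is precisely the new ingredient announced in the abstract.

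Once $\|S_t\|_{L^\infty(\R)}$ is under control the remaining estimates close by bootstrap. The bound (H) then makes $\|T[S](t)\|_{L^\infty}$ finite, so a Gronwall/comparison argument along characteristics for \eqref{eq2a7} yields $f\in L^\infty([0,\infty);L^p(\R\times V))$ for every $p$, establishing \eqref{eq4a34} and hence $\rho\in L^\infty$. Feeding this back into \eqref{eq2a9} upgrades the source to $L^\infty$, which recovers the matching lower bound for $S$ and, via parabolic smoothing, the full uniform estimate $\|z\|_{L^\infty}+\|S\|_{L^\infty}+\|S\|_{W^{1,q}}\leq C$ for $1\leq q<\infty$ (the integrability $\|\partial_x G_\tau\|_{L^2(\R)}\sim\tau^{-3/4}$ being exactly what permits the $W^{1,q}$ bound for finite $q$) together with \eqref{eq4a35}, \eqref{eq4a36} and the $W^{2,\infty}$ regularity from local heat-kernel gains. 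Since none of the norms appearing in the continuation criterion blows up in finite time, the local solution extends to all $t>0$, completing the proof.
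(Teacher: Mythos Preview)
Your overall architecture matches the paper's: local existence by a fixed-point argument, then a priori estimates (mass conservation, uniform $L^\infty$ bounds on $z$ and $S$, control of $S_x$ and $S_t$, then $L^p$ bounds on $f$) to extend globally. The preliminary estimates you list are essentially those of Lemmas~\ref{estimatez1}--\ref{1destimate1n}, and you correctly single out the $L^\infty$ control of $S_t$ as the crux.

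The gap is in how you propose to close the $S_t$ estimate. Your plan is to first get $S_t\in L^2(\R)$ from the $L^1$ sources (this part is fine) and then upgrade to $L^\infty$ by a low/high frequency split, with the high frequencies ``dominated by the $L^2$ bound just obtained''. But an $L^2$ bound on $S_t$ alone does not control $\int_{|\xi|>R}|\widehat{S_t}|\,d\xi$: Cauchy--Schwarz forces a weight and leads to $\|S_{tx}\|_{L^2}$, which you do not have (and cannot get, since the divergence source $(zj)_x$ would then require $\|\partial_x^2 G_\tau\|_{L^2}\sim\tau^{-5/4}$, non-integrable at $0$). Equivalently, if you attack the Duhamel term $\int_0^t e^{-(\xi^2+1)(t-s)}i\xi\,\widehat{zj}\,ds$ directly, the short-time/high-frequency piece needs $\|zj\|_{L^2}\lesssim\|\rho\|_{L^2}$, which is exactly the quantity tied to $f$ through $T[S]$ --- so the feedback loop is \emph{not} broken at the level of $S_t$ with only $L^1$ data.

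The paper does not try to break the loop there. Instead, Lemmas~\ref{1destimate1} and \ref{1destimate2} use a \emph{time} splitting $\int_0^r+\int_r^t$ in the Duhamel integral (with the near piece estimated via $\|\rho\|_{L^2}$ and the far piece via $\|\rho\|_{L^1}$) and an optimized choice of $r$ to obtain
\[
\|S_x(t)\|_{L^\infty}+\|S_t(t)\|_{L^\infty}\le C\bigl(1+(\ln t)_+ +(\ln\sup_{0\le s\le t}\|\rho(s)\|_{L^2})_+\bigr),
\]
i.e.\ a bound that is only \emph{logarithmic} in $\|\rho\|_{L^2}$. Plugging this into the characteristics estimate for $f$ yields an inequality of the form $y(t)\le y(0)+\int_0^t [a(\tau)y(\tau)(\ln y(\tau))_+ + b(\tau)y(\tau)]\,d\tau$ for $y=\sup_{s\le t}\|f(s)\|_{L^2}$, which is then closed by the logarithmic Gronwall inequality of Lemma~\ref{grownwall}. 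That lemma is the missing ingredient in your plan: the loop is closed at the level of $f$, not $S_t$, and the logarithm is what makes it closable.
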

{\color{black}
The key  to prove the global existence of solutions of (\ref{eq2a7})-(\ref{eq2a10}) is to derive a priori estimates of $S_x$ and $S_t$ so that the bound of $T[S]$ assumed in \eqref{eq2a6} can be controlled. However this is a very challenging issue since there is no general theory or method to estimate the spatial or temporal gradient of solutions for parabolic equations. The main new contribution of this paper is to employ the Fourier transform to derive a priori estimates of $S_x$ and $S_t$ in one dimension (see Lemma \ref{1destimate1} and Lemma \ref{1destimate2}) which enable us to derive the global existence of solutions by using a Gronwall inequality with logarithmic nonlinearity (see Lemma \ref{grownwall}). The results in Lemma \ref{1destimate1} and Lemma \ref{1destimate2} can only be ensured in $\R$ while remain open in the multi-dimensional space $\R^N (N\geq 2)$.

The second result of this paper is to extend the result of hydrodynamic limits of (\ref{eq2a7}) established in \cite{DS} under the restrictive condition \eqref{eq1a18} to a more general turning kernel $T[S]$ which is allowed to be unbounded. The details are presented in section 5.
}

\section{Preliminary results}
\setcounter{equation}{0}
\renewcommand{\theequation}{\thesection.\arabic{equation}}
In this section, we introduce and prove some preliminary results. In what
follows, we use $C$ to denote a positive generic constant which can change
from one line to another. By $\Gamma(t,x)$ we denote the fundamental
solution of the differential operator $\partial_t-\Delta_x+1 $ in
$\R^N (N\geq 1)$
\begin{equation}\label{eq3a2}
\Gamma(t,x)=\frac{1}{(4\pi t)^{N/2}}
\exp\bigg(\frac{-|x|^2}{4t}-  t \bigg).
\end{equation}
%
%
%

\begin{lemma}\label{fundsolu1}
Let $1\leq p \leq +\infty$ and $\Gamma(t,x)$ be defined in (\ref{eq3a2}). Then
it holds that
\begin{eqnarray*}
\int_0^{\infty} \|\partial_x^k \Gamma(t, \cdot)\|_{L^p(\R^N)}dt
<+\infty, \ {\rm if}\ \frac{1}{p}>1+\frac{k-2}{N}.
\end{eqnarray*}
\end{lemma}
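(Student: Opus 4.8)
The plan is to reduce everything to the scaling properties of the classical heat kernel. Writing $G(t,x)=\frac{1}{(4\pi t)^{N/2}}\exp\!\left(-|x|^2/(4t)\right)$ for the heat kernel on $\R^N$, we have $\Gamma(t,x)=e^{-t}G(t,x)$, and since the factor $e^{-t}$ is independent of $x$ it holds that $\partial_x^k\Gamma(t,x)=e^{-t}\,\partial_x^k G(t,x)$. Hence it suffices to control $\|\partial_x^k G(t,\cdot)\|_{L^p(\R^N)}$ as a function of $t$ and then absorb the exponential weight.

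First I would establish the self-similar form of the derivatives of $G$. Setting $y=x/\sqrt{t}$, a direct computation gives $\partial_x^k G(t,x)=t^{-N/2-k/2}\,\Phi_k(x/\sqrt{t})$, where $\Phi_k(y)=\frac{1}{(4\pi)^{N/2}}\,\partial_y^k e^{-|y|^2/4}$ is a fixed Schwartz function (a polynomial times a Gaussian), so that $\|\Phi_k\|_{L^p(\R^N)}<\infty$ for every $1\le p\le\infty$. Performing the change of variables $x=\sqrt{t}\,y$ in the $L^p$ integral then yields
\[
\|\partial_x^k G(t,\cdot)\|_{L^p(\R^N)}=C_{N,k,p}\,t^{-\frac{N}{2}\left(1-\frac1p\right)-\frac{k}{2}},
\qquad C_{N,k,p}=\|\Phi_k\|_{L^p(\R^N)},
\]
the case $p=\infty$ corresponding to $1/p=0$ and following directly from the pointwise bound. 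Multiplying by $e^{-t}$ gives $\|\partial_x^k\Gamma(t,\cdot)\|_{L^p(\R^N)}=C_{N,k,p}\,e^{-t}\,t^{-\alpha}$ with $\alpha:=\frac{N}{2}\!\left(1-\frac1p\right)+\frac{k}{2}$.

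It then remains to integrate in time. We obtain
\[
\int_0^\infty\|\partial_x^k\Gamma(t,\cdot)\|_{L^p(\R^N)}\,dt
=C_{N,k,p}\int_0^\infty e^{-t}\,t^{-\alpha}\,dt,
\]
an Euler-type integral which is finite precisely when $1-\alpha>0$. The tail at $t\to\infty$ is harmless because the weight $e^{-t}$ forces exponential decay, so the only constraint comes from the integrability of $t^{-\alpha}$ near the origin, namely $\alpha<1$. A short rearrangement shows that $\alpha<1$ is equivalent to $N\!\left(1-\frac1p\right)+k<2$, i.e. $\frac1p>\frac{N+k-2}{N}=1+\frac{k-2}{N}$, which is exactly the stated hypothesis.

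I do not expect any genuine obstacle here: the whole argument rests on the exact scaling of the Gaussian and the elementary convergence of the Gamma integral. The only point requiring a little care is the small-time singularity, where the sharp threshold $\alpha<1$ is pinned down; the exponential factor $e^{-t}$ renders the large-time behaviour automatic and is precisely what lets the joint dependence on $k$ and $p$ be captured by the single condition $\frac1p>1+\frac{k-2}{N}$.
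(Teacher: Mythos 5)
Your proof is correct and takes essentially the same route as the paper: both reduce the claim to the estimate $\|\partial_x^k\Gamma(t,\cdot)\|_{L^p(\R^N)}\leq Ce^{-t}t^{\frac{N}{2p}-\frac{N}{2}-\frac{k}{2}}$ and then check convergence of the resulting Euler integral, whose only constraint is the singularity at $t=0$, giving exactly $\frac{1}{p}>1+\frac{k-2}{N}$. The sole difference is that the paper imports this kernel bound from a cited reference, whereas you derive it from the self-similar scaling of the Gaussian and in fact obtain an exact identity, which additionally shows the stated threshold is sharp.
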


\begin{proof}First recall a basic result (cf. \cite{PVW}): for any $1\leq p \leq \infty$, $0\leq l,k\leq \infty$, there is a
constant $C$ such that for any $t>0$ it holds that
\begin{eqnarray}\label{Gamma}
\|\partial^l_t\partial^k_x  \Gamma(t,\cdot)\|_{L^p(\R^N)} &\leq C
e^{-  t}t^{\frac{N}{2p}-\frac{N}{2}-l-\frac{k}{2}}.
\end{eqnarray}
Then we immediately have that
\begin{eqnarray*}
\begin{array}{lll}
\D \int_0^{\infty} \|\partial^k_x\Gamma(t)\|_{L^p(\R^N)}dt\leq
  \D C \int_0^{\infty}e^{-  t}
t^{\frac{N}{2}(\frac{1}{p}-1)-\frac{k}{2}}dt.
\end{array}
\end{eqnarray*}
Then the above integral is bounded if
$\frac{N}{2}(\frac{1}{p}-1)-\frac{k}{2}>-1$, i.e., $\frac{1}{p}>1+\frac{k-2}{N}$. The proof is completed.
\end{proof}

Next we present a generalized Gronwall's type inequality for later use based on an inequality in \cite[Lemma 4]{HKS1}. Throughout the paper, we use the notation $\phi_+$ to denote the positive part of $\phi_+$, namely $\phi_+=0$ if $\phi\leq0$ and $\phi_+=\phi$ if $\phi>0$.

\begin{lemma}\label{grownwall}
Let $a(t)$ and $b(t)$ be positive functions. If the function
$y(t)\geq 0$ is differentiable in $t$ and satisfies
\begin{equation}\label{eq}
y(t)\leq y(0)+\int_0^t {  [}a(\tau)y(\tau)(\ln
y(\tau))_++b(\tau)y(\tau){ ]}d\tau,
\end{equation}
then the following inequality holds
\begin{equation*}
y(t) \leq \bigg[(1+y(0)) \exp\bigg(\int_0^t [a(\tau)+b(\tau)]
e^{-\int_0^{\tau}a(s)ds}d\tau\bigg)\bigg]^{\exp(\int_0^t a(s)ds)}.
\end{equation*}
\end{lemma}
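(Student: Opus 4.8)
The plan is to tame the super-linear logarithmic term by three successive reductions: a pointwise bound that removes the positive part, a unit shift that forces the argument of the logarithm to stay above $1$, and finally a logarithmic substitution that linearizes the resulting integral inequality so that the classical (linear) Gronwall lemma applies. First I would record the elementary pointwise inequality $(\ln y)_+\leq \ln(1+y)$, valid for all $y\geq 0$ (both sides vanish at $y=0$, and for $y>1$ we have $\ln y\leq\ln(1+y)$, while for $y\leq 1$ the left side is $0$). Setting $w(t)=1+y(t)$, which satisfies $w\geq 1$ and hence $\ln w\geq 0$, and using $y\leq w$ together with $y(\ln y)_+\leq (1+y)\ln(1+y)=w\ln w$, the hypothesis \eqref{eq} turns into
\[
w(t)\leq w(0)+\int_0^t\big[a(\tau)\,w(\tau)\ln w(\tau)+b(\tau)\,w(\tau)\big]\,d\tau,\qquad w(0)=1+y(0).
\]

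Next I would introduce the right-hand side as a comparison function $W(t)$ defined by that integral expression, so that $w\leq W$ and, by the fundamental theorem of calculus, $W'=a\,w\ln w+b\,w$ with $W(0)=1+y(0)\geq 1$. Since the integrand is nonnegative (because $w\geq 1$), $W$ is nondecreasing and stays $\geq 1$. The crucial monotonicity input is that $s\mapsto s\ln s$ is increasing on $[1,\infty)$; combined with $1\leq w\leq W$ this yields $w\ln w\leq W\ln W$ and $w\leq W$, whence the closed differential inequality $W'\leq a\,W\ln W+b\,W$. Now the logarithmic substitution $u(t)=\ln W(t)\geq 0$ linearizes everything: dividing by $W>0$ gives $u'=W'/W\leq a\,u+b$, a genuinely linear differential inequality with $u(0)=\ln(1+y(0))$.

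At this point the standard integrating-factor argument finishes the proof: with $A(t)=\int_0^t a(s)\,ds$ one has $\frac{d}{dt}\big(u\,e^{-A}\big)\leq b\,e^{-A}$, and integrating from $0$ to $t$ gives $u(t)\leq e^{A(t)}\big[\ln(1+y(0))+\int_0^t b(\tau)e^{-A(\tau)}\,d\tau\big]$. Exponentiating, using $W=e^{u}$, $y\leq w\leq W$, and $(1+y(0))^{e^{A(t)}}=\exp\!\big(e^{A(t)}\ln(1+y(0))\big)$, produces the sharp bound with $b$ alone inside the inner integral; since $\int_0^t a\,e^{-A}\geq 0$ and the base $1+y(0)\geq 1$, enlarging $b$ to $a+b$ only increases the right-hand side and recovers exactly the stated inequality. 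The one genuine obstacle is the super-linear coupling $y\ln y$, which no straightforward Gronwall handles directly; the whole argument hinges on recognizing that passing to $\ln W$ converts this borderline-super-linear growth into a linear problem, with the shift $w=1+y$ being what makes the substitution legitimate (keeping $\ln w\geq 0$ and enabling the monotonicity of $s\ln s$). The remaining manipulations are routine bookkeeping with the integrating factor.
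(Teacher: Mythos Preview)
Your argument is correct and follows essentially the same route as the paper: pass from $y(\ln y)_+$ to $(1+y)\ln(1+y)$, replace the unknown by the right-hand side of the integral inequality to obtain a closed differential inequality $W'\leq aW\ln W+bW$, and then resolve that inequality. The only substantive difference is that the paper cites an external result (Lemma~4 of \cite{HKS1}) for this last step, whereas you carry it out explicitly via the linearizing substitution $u=\ln W$ and an integrating factor; your version is therefore self-contained and also makes transparent why the sharper bound with $b$ alone holds, with the stated $a+b$ version following by monotonicity.
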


\begin{proof}
First note that for $y\geq 0$, it holds that $y (\ln y)_+<(1+y)\ln (1+y)$.
Then one has from (\ref{eq}) that
\begin{equation*}
\begin{aligned}
y(t)  
& \leq y(0)+ \int_0^t [a(\tau)(1+y)\ln (1+y)+b(\tau)(1+y)]d\tau.
\end{aligned}
\end{equation*}
Let $X(t)=y(0)+ \int_0^t [a(\tau)(1+y)\ln (1+y)+b(\tau)(1+y)]d\tau$. Then we get $y(t)\leq X(t)$ and
\begin{eqnarray*}
\begin{aligned}
\frac{d}{dt}X(t)&=a(t)(1+y)\ln (1+y)+b(t)(1+y)\\
& \leq a(t)(1+X)\ln (1+X)+b(t)(1+X).
\end{aligned}
\end{eqnarray*}
Define $w(t)=1+X(t)$. The above inequality becomes
$
\frac{d w}{dt} \leq a(t)w \ln w+b(t)w
$
which gives by using the result of \cite[Lemma 4]{HKS1}
\begin{equation*}
w(t)\leq \bigg[w(0) \exp\bigg(\int_0^t b(\tau)
e^{-\int_0^{\tau}a(s)ds}d\tau\bigg)\bigg]^{\exp(\int_0^t a(s)ds)}.
\end{equation*}
Hence the facts $w(0)=1+X(0)=1+y(0)$ and $y(t)\leq X(t)\leq w(t)$ complete the proof.
\end{proof}

\noindent In the following, we show that for any $(t,x)\in[0,\infty) \times \R$, it holds that
$n=z\rho \in L^{\infty}([0,\infty); L^1(\R))$.
\begin{lemma}\label{estimatez1}
Let $f_0 \in L^1(\R\times V)$ and $z_0\in L^{\infty}(\R)$. Then the
solution $z$ of equation (\ref{eq2a8}) satisfies that
\begin{equation}\label{eq3a7}
\|n(t)\|_{L^1(\R)}=\|z\rho(t)\|_{L^1(\R)}\leq Ce^{-t}(1+t).
\end{equation}
\end{lemma}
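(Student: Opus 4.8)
The plan is to reduce the estimate to a scalar differential inequality for the total mass $m(t):=\|n(t)\|_{L^1(\R)}$ and then integrate it with an integrating factor. First I would fix the sign structure. Since $f_0,z_0,S_0\ge 0$, the transport--collision equation \eqref{eq2a7} propagates nonnegativity of $f$ (the gain term $\int_V T[S]f'\,dv'$ is nonnegative and the loss term is linear in $f$), so $\rho=\int_V f\,dv\ge 0$. Using the conservation law $\rho_t+j_x=0$ below, equation \eqref{eq2a8} rewrites along the mean-velocity characteristics $\dot x=j/\rho$ as $\dot z=g(S)-z$, and since $g\ge 0$ a comparison argument gives $z\ge 0$; moreover the explicit solution $z=z_0e^{-t}+\int_0^t e^{-(t-s)}g(S)\,ds$ along characteristics already shows $z$ is bounded. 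Hence $n=\rho z\ge 0$ and $m(t)=\int_\R n\,dx$, so all the $L^1$ norms below are genuine integrals.

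Second, I would record conservation of mass for $\rho$. Integrating \eqref{eq2a7} in $v$ and $x$, the gain--loss collision operator integrates to zero over $v$ (interchange $v\leftrightarrow v'$ in the double integral), while the transport contribution $\int_\R(\cdot)_x\,dx$ is a vanishing boundary flux; this yields $\|\rho(t)\|_{L^1(\R)}=\|f_0\|_{L^1(\R\times V)}=:M_0$ for all $t$. Third, integrate \eqref{eq2a8} in $x$. The divergence term $(zj)_x$ again contributes only a vanishing boundary term, so
\[
\frac{d}{dt}m(t)+m(t)=\int_{\R}\rho\,g(S)\,dx.
\]
With the boundedness of $g$ from \eqref{eq2a6} and the conserved mass, the source obeys $\int_\R \rho\,g(S)\,dx\le \|g\|_{L^\infty}M_0$, and the integrating factor $e^{t}$ gives $m(t)\le e^{-t}m(0)+\|g\|_{L^\infty}M_0(1-e^{-t})$, which already establishes $n\in L^\infty([0,\infty);L^1(\R))$.

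The delicate point, and the main obstacle, is upgrading boundedness to the stated bound $Ce^{-t}(1+t)$: the crude estimate of the source yields only a bounded, non-decaying majorant, so one must show that the source itself decays, namely $\int_\R \rho\,g(S)\,dx\le Ce^{-t}$. This requires the finer structure $g(0)=0$ of the prototypical transduction function $g(S)=S/(K_D+S)$ together with decay of $S$ extracted from the strong absorption $-S(1+S)$ in \eqref{eq2a9}, which lets one write $g(S)\lesssim S$ and transfer the $S$-decay to the source. Feeding $\int_\R\rho\,g(S)\,dx\le Ce^{-t}$ into the integrating-factor identity produces the convolution $e^{-t}\ast e^{-t}=t\,e^{-t}$, and hence the claimed bound $m(t)\le Ce^{-t}(1+t)$; the care needed here is to justify that the boundary/flux terms genuinely vanish, which relies on the spatial integrability of $f$ and $z$ supplied by the local existence theory.
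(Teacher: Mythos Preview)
Your first three steps---nonnegativity, mass conservation $\|\rho(t)\|_{L^1}=\|f_0\|_{L^1(\R\times V)}$, and the scalar inequality
\[
m'(t)+m(t)\le \|g\|_{L^\infty}\,\|\rho_0\|_{L^1}
\]
obtained by integrating \eqref{eq2a8} in $x$---are exactly the paper's argument, and your integration via $e^{t}$ is correct: it gives $m(t)\le e^{-t}m(0)+\|g\|_{L^\infty}\|\rho_0\|_{L^1}(1-e^{-t})$, i.e.\ uniform boundedness but \emph{not} the decaying profile $Ce^{-t}(1+t)$.

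The discrepancy you flag is real. The paper does not do any of the extra work you propose; from the very same differential inequality it writes $\|n(t)\|_{L^1}\le Ce^{-t}(\|\rho_0\|_{L^1}\,t+\|n_0\|_{L^1})$, which is what one would get by integrating $(e^{t}m)'\le C\|\rho_0\|_{L^1}$ rather than the correct $(e^{t}m)'\le C\|\rho_0\|_{L^1}e^{t}$. So the stated $e^{-t}(1+t)$ rate is a slip in the paper, not a consequence of a deeper argument. Your plan to recover genuine decay via $g(0)=0$ and dissipation of $S$ would require hypotheses beyond those of the lemma (only boundedness of $g$ is assumed in \eqref{eq2a6}, and the lemma does not use \eqref{eq2a9} at all), and in any case is unnecessary for the paper's purposes: the only place \eqref{eq3a7} is invoked later is through the trivial bound $\max_{0\le s\le t}(1+t-s)e^{-(t-s)}=1$ (see the proof of Lemma~\ref{1destimate1n}), so uniform boundedness of $\|n(t)\|_{L^1}$---which you have already established---is all that is actually used.
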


{\it Proof}. Integrating equation (\ref{eq2a8}) with respect to $x$ gives
\begin{equation*}
\frac{d}{dt}\int_{\R}n(t,x)dx=\int_{\R}
\rho(t,x)g(S(t,x))dx-\int_{\R}n(t,x)dx.
\end{equation*}
Using the boundedness of $g$, we deduce that
\begin{equation}\label{eq3a8}
\frac{d}{dt}\int_{\R}n(t,x)dx+\int_{\R}n(t,x)dx\leq C
\|\rho\|_{L^1(\R)}.
\end{equation}
Integrating equation (\ref{eq2a7}) with respect to $v$ over $V$
gives the conservation of cell density
\begin{equation}\label{eq3a9}
\rho_t+ j_x=0,
\end{equation}
which immediately implies that
\begin{equation}\label{eq3a10}
\|\rho(t)\|_{L^1(\R)}=\|\rho_0\|_{L^1(\R)}=\|f_0\|_{L^1(\R\times V)}.
\end{equation}
Substituting (\ref{eq3a9}) into (\ref{eq3a8}) and applying the Gronwall's inequality, one deduces that
\begin{equation*}
\|n(t)\|_{L^1(\R)}=\|z\rho(t)\|_{L^1(\R)}\leq
Ce^{-t}(\|\rho_0\|_{L^1(\R)}t+\|n_0\|_{L^1(\R)}),
\end{equation*}
which implies (\ref{eq3a7}) due to the fact $n_0=z_0 \rho_0 \in L^1(\R)$ along with \eqref{eq3a10} and the condition in Lemma \ref{estimatez1}. \qed \\

With the equation (\ref{eq3a9}), we can write the equation (\ref{eq2a8}) as
\begin{equation}\label{eq3a10n}
\rho z_t+j z_x=\rho(g(S)-z).
\end{equation}
Recalling that $j(t,x)=\int_V vf(t,x,v)dv$ and $\rho(t,x)=\int_V
f(t,x,v)dv$, we see that $|\frac{j}{\rho}|<C(V)$ for some constant $C(V)$ depending on the measure of $V$. {\cred Because of the biological relevance, we are only interested in the case that $f, z$ and $S$ are all nonnegative. If there is a point $(t_0, x_0)$ such that $\rho(t_0, x_0)=0$, then $f(t_0, x_0, v)=0$ for every $v \in V$ since $\rho(t_0,x_0)=\int_V f(t_0, x_0, v)dv$ denotes the total cell density at point $(t_0, x_0)$. As a result, $j(t_0, x_0)=\int_V vf(t_0, x_0, v)dv=0$ and the equation (\ref{eq3a10n}) is satisfied in the solution space indicated in Theorem \ref{mth}. Therefore we only need to condider the case $\rho(t,x)>0$ and consequently} we can rewrite \eqref{eq3a10n} as
\begin{equation}\label{eq3a11}
z_t+\frac{j}{\rho} {  z_x}=g(S)-z.
\end{equation}
Then we have the following result for the solution of equation (\ref{eq3a11}).

\begin{lemma}\label{estimatez}
Let $z_0 \in L^{\infty}(\R)$. Then the solution $z$ of equation
(\ref{eq3a11}) satisfies
\begin{equation}\label{eq3a14}
\|z\|_{L^{\infty}(\R)}\leq C(1+e^{-t}).
\end{equation}
\end{lemma}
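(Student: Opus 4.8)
The plan is to estimate $\|z\|_{L^\infty}$ directly from the transport equation \eqref{eq3a11} by reading it along characteristics. First I would introduce the characteristic curves $X(s;t,x)$ associated with the transport velocity $\frac{j}{\rho}$, defined by
\begin{equation*}
\frac{dX}{ds}=\frac{j}{\rho}(s,X(s;t,x)), \quad X(t;t,x)=x.
\end{equation*}
These curves are well defined because, as observed just before the statement, $|\frac{j}{\rho}|<C(V)$ is bounded (and $z$ is being handled on the set $\rho>0$ where \eqref{eq3a11} holds, with the $\rho=0$ case already disposed of in the remark preceding the lemma). Along such a characteristic, the left-hand side of \eqref{eq3a11} becomes a total derivative, so setting $\zeta(s)=z(s,X(s;t,x))$ we obtain the ordinary differential equation
\begin{equation*}
\frac{d\zeta}{ds}=g(S(s,X(s;t,x)))-\zeta(s).
\end{equation*}

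Next I would solve this scalar linear ODE explicitly. Integrating with the integrating factor $e^{s}$ from $0$ to $t$ yields
\begin{equation*}
\zeta(t)=\zeta(0)e^{-t}+\int_0^t e^{-(t-s)}g(S(s,X(s;t,x)))\,ds.
\end{equation*}
Here $\zeta(0)=z_0(X(0;t,x))$ so $|\zeta(0)|\le\|z_0\|_{L^\infty}$, and by assumption \eqref{eq2a6} the function $g$ is bounded, say $0\le g\le M$. Estimating the two terms separately gives
\begin{equation*}
|\zeta(t)|\le \|z_0\|_{L^\infty}e^{-t}+M\int_0^t e^{-(t-s)}\,ds=\|z_0\|_{L^\infty}e^{-t}+M(1-e^{-t}).
\end{equation*}
Since the starting point $x$ (and the characteristic through it) was arbitrary, taking the supremum over $x$ produces $\|z(t)\|_{L^\infty}\le \|z_0\|_{L^\infty}e^{-t}+M(1-e^{-t})\le C(1+e^{-t})$, which is exactly \eqref{eq3a14}.

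The bound is clean precisely because the transport term drops out along characteristics and the remaining reaction dynamics $\partial_t z = g(S)-z$ has a damping term $-z$ that competes against the bounded source $g(S)$; this is what yields the time-uniform bound rather than a growing one. The main technical obstacle I anticipate is justifying the characteristic method rigorously: the velocity $\frac{j}{\rho}$ is merely bounded, not obviously Lipschitz in $x$, so classical existence/uniqueness of characteristics is not automatic, and the set $\{\rho>0\}$ on which \eqref{eq3a11} is posed may have a complicated structure. A careful treatment would either invoke the maximum principle for the transport equation directly (testing against suitable truncations and using the sign structure, which avoids tracking individual characteristics) or work with the weak/renormalized formulation; in either case the essential estimate is the same comparison $|z|\le \|z_0\|_{L^\infty}e^{-t}+\|g\|_{L^\infty}(1-e^{-t})$, and the conservative nature of the transport operator (no zeroth-order contribution from $\frac{j}{\rho}z_x$ to the $L^\infty$ growth) is what makes this comparison valid.
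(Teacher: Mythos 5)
Your proposal is correct and is essentially the paper's own argument: the paper also integrates equation \eqref{eq3a11} along the characteristics of $j/\rho$ (using the boundedness $|j/\rho|\le C(V)$ noted before the lemma) and uses the integrating factor $e^{t}$ — implemented there as the substitution $\tilde{z}=e^{t}z$ at the PDE level rather than, as you do, at the ODE level after restricting to a characteristic — together with the boundedness of $g$ to obtain $\|z(t)\|_{L^\infty(\R)}\le e^{-t}\|z_0\|_{L^\infty(\R)}+C$. The two computations are identical in substance, so your proof matches the paper's.
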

\begin{proof}
First we rewrite the equation (\ref{eq3a11}) as follows
\begin{equation}\label{eq3a11n}
\tilde{z}_t+\frac{j}{\rho} {   \tilde{z}_x}=e^tg(S)
\end{equation}
where $\tilde{z}=e^t z$. Then the characteristic equation of the hyperbolic equation
(\ref{eq3a11n}) is
\begin{equation*}
\frac{dx}{dt}=\frac{j(t,x)}{\rho(t,x)}=:\Lambda(t,x).
\end{equation*}
which satisfies $
0 \leq \Lambda(t,x) \leq C(V)
$ as argued above.
Hence along backward characteristics starting at $(t,x)$, we have
that for any $0\leq \tau\leq t$
\begin{equation}\label{eq3a13}
\mathbf{x}(\tau;t,x)=x-\int_{\tau}^t \Lambda(\tau,\textbf{x}(\tau))d\tau.
\end{equation}
Then integrating the equation (\ref{eq3a11}) along the
characteristic curve (\ref{eq3a13}), one has that
\begin{equation*}
\tilde{z}(t,x)= z_0(\textbf{x}(0))+\int_0^t
e^{\tau} g(S(\tau,\textbf{x}(\tau)))d\tau.
\end{equation*}
 Noting that $g(S(t,x))$ is bounded for any $S(t,x)$, we have
\begin{equation*}
\|\tilde{z}(t,\cdot)\|_{L^{\infty}(\R)} \leq \|z_0\|_{L^{\infty}(\R)}+C \int_0^t
e^td\tau=\|z_0\|_{L^{\infty}(\R)}+C(e^t-1).
\end{equation*}
This indicates that
\begin{equation*}
\|{z}(t,\cdot)\|_{L^{\infty}(\R)} \leq e^{-t}(\|z_0\|_{L^{\infty}(\R)}+C(e^t-1))\leq  e^{-t}\|z_0\|_{L^{\infty}(\R)}+C
\end{equation*}
which completes the proof.
\end{proof}

The following Lemma gives {\it a priori} estimates for the $L^p$-norm $(1\leq p \leq \infty)$ of $S(t,x)$.

\begin{lemma}\label{1destimate1n}
Let $f_0 \in L^1(\R\times V), S_0\in L^p(\R) (1\leq p\leq \infty)$ and $z_0 \in L^{\infty}(\R)$.
Then there exists a constant $C$ such that the solution $S$ of equation
(\ref{eq2a9}) satisfies the following properties
\begin{equation}\label{eq4a1}
\|S(t)\|_{L^{p}(\R)} \leq C(1+\|\rho_0\|_{L^{1}(\R)}+\|S_0\|{  _{L^p(\R)}}), \ 1 \leq p \leq \infty.
\end{equation}
\end{lemma}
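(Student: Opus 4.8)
The plan is to treat \eqref{eq2a9} as a semilinear heat equation for the operator $\partial_t-\partial_{xx}+1$, whose fundamental solution is $\Gamma$ from \eqref{eq3a2}, and to read off the $L^p$ bound from the Duhamel (mild) representation
\begin{equation*}
S(t)=\Gamma(t)\ast S_0+\int_0^t \Gamma(t-s)\ast\big[\rho g(S)-z\rho-S^2\big](s)\,ds .
\end{equation*}
The guiding idea is that the solution is nonnegative and $\Gamma\geq 0$, so the two terms carrying a minus sign, namely the coupling term $-z\rho$ and the absorbing nonlinearity $-S^2$, only decrease $S$ and may be discarded when seeking an upper bound. This reduces the whole estimate to controlling the single production term $\rho g(S)$, and it is here that the factor $\rho$ (present also in \eqref{eq2a8}) is decisive: since $g$ is bounded by assumption \eqref{eq2a6} and total mass is conserved by \eqref{eq3a10}, one has the uniform control $\|\rho g(S)(s)\|_{L^1(\R)}\leq \|g\|_{L^\infty}\|\rho(s)\|_{L^1(\R)}=\|g\|_{L^\infty}\|\rho_0\|_{L^1(\R)}$, which is exactly the origin of the $\|\rho_0\|_{L^1}$ appearing in \eqref{eq4a1}.

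First I would record the two convolution estimates obtained from Young's inequality: $\|\Gamma(t)\ast S_0\|_{L^p}\leq\|\Gamma(t)\|_{L^1}\|S_0\|_{L^p}=e^{-t}\|S_0\|_{L^p}$ for the initial datum, and $\|\Gamma(t-s)\ast(\rho g(S))\|_{L^p}\leq\|\Gamma(t-s)\|_{L^p}\,\|\rho g(S)\|_{L^1}$ for the source (the choice $L^p\ast L^1\to L^p$). Combining these with the nonnegativity reduction above gives
\begin{equation*}
\|S(t)\|_{L^p(\R)}\leq e^{-t}\|S_0\|_{L^p(\R)}+\|g\|_{L^\infty}\|\rho_0\|_{L^1(\R)}\int_0^t\|\Gamma(t-s)\|_{L^p(\R)}\,ds .
\end{equation*}
The remaining time integral is handled by Lemma \ref{fundsolu1}: after the substitution $u=t-s$ it is bounded by $\int_0^\infty\|\Gamma(u)\|_{L^p(\R)}\,du$, which is finite precisely when $\tfrac1p>1+\tfrac{k-2}{N}$ with $k=0$, $N=1$, i.e. $\tfrac1p>-1$, and this holds for every $p\in[1,\infty]$. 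Collecting the constants yields \eqref{eq4a1} with a constant independent of $t$, as claimed.

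The step I expect to be the crux is obtaining the source bound uniformly across the full range $1\le p\le\infty$ and uniformly in time simultaneously. A naive bound $\|g(S)\|_{L^\infty}\leq\|g\|_{L^\infty}$ combined with $\|\Gamma\|_{L^1}$ only produces an $L^\infty$ estimate, and it fails for finite $p$ because a source that does not decay in space is not in $L^p(\R)$; the resolution is to keep the factor $\rho$ and estimate the source in $L^1$ via mass conservation, then let the smoothing kernel $\Gamma$ carry the output up to $L^p$. This mechanism is genuinely one-dimensional in flavour, since the integrability $\int_0^\infty\|\Gamma\|_{L^p}\,dt<\infty$ for all $p$ rests on the exponent condition of Lemma \ref{fundsolu1} with $N=1$. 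For the endpoint $p=\infty$ the same scheme applies verbatim (using $\|\Gamma(t-s)\|_{L^\infty}\leq Ce^{-(t-s)}(t-s)^{-1/2}$ from \eqref{Gamma}), so no separate interpolation argument between $L^1$ and $L^\infty$ is required.
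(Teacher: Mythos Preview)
Your proof is correct and takes essentially the same route as the paper: Duhamel representation for $\partial_t-\partial_{xx}+1$, drop the nonnegative contribution $\int_0^t\Gamma\ast S^2$ by sign (using $S\geq 0$), control the production term $\rho\, g(S)$ in $L^1$ via boundedness of $g$ and mass conservation \eqref{eq3a10}, and invoke Lemma~\ref{fundsolu1} with $k=0$, $N=1$ to bound $\int_0^\infty\|\Gamma(u)\|_{L^p}\,du$ uniformly for all $1\le p\le\infty$. The only difference is that you also discard the $-z\rho$ term by sign, whereas the paper keeps it on the right-hand side and bounds $\|z\rho(t-s)\|_{L^1}$ separately via Lemma~\ref{estimatez1}; your shortcut is legitimate once $z\geq 0$ is known (which follows from $z_0\geq 0$ and $g\geq 0$ along the characteristics of \eqref{eq3a11}) and spares you the appeal to Lemma~\ref{estimatez1} at this stage.
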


{\it Proof}.
Then by the Duhamel's principle, the solution of (\ref{eq2a9}) can be
implicitly represented as
\begin{eqnarray}\label{eq4a5}
\begin{array}{lll}
S(t,x)&=&\D\int_0^t
\Gamma(s,\cdot)\ast(g(S(t-s,\cdot))\rho(t-s,\cdot))ds-\int_0^t
\Gamma(s,\cdot)\ast (z\rho)(t-s,\cdot)ds\\
&& \D-\int_0^t \Gamma(s,\cdot)\ast S^2(t-s,\cdot)ds+\Gamma(t,\cdot)
\ast S_0(x)
\end{array}
\end{eqnarray}
Notice that $\int_0^t \Gamma(s,\cdot)\ast S^2(t-s,\cdot)ds\geq 0$. Then by Jensen's inequality and convolution inequality, it follows from (\ref{eq4a5}) that
{  \begin{eqnarray}\label{eq4a6}
\begin{array}{lll}
\|S(t)\|_{L^{p}(\R)} &\leq \D \int_0^t
\|\Gamma(s,\cdot)\ast(g(S(t-s,\cdot))\rho(t-s,\cdot))\|_{L^{p}(\R)}ds\\
&+\D\int_0^t \|\Gamma(s,\cdot)\|_{L^p(\R)} \|z \rho(t-s,\cdot)\|_{L^1(\R)}ds+\|\Gamma(t,\cdot)\|_{L^1(\R)}
\|S_0\|_{L^p(\R)} \D\\
&\leq C \D \int_0^t \|\Gamma(s,\cdot)\|_{L^{p}(\R)}
(\|\rho(t-s,\cdot)\|_{L^{1}(\R)}+\|z\rho(t-s,\cdot)\|_{L^1(\R)})ds\\[3mm]
& \ \ \ +\|\Gamma(t,\cdot)\|_{L^1(\R)} \|S_0\|_{L^p(\R)},
\end{array}
\end{eqnarray}
where we have used the boundedness of $g$ assumed in (\ref{eq2a6}). It can be easily checked that $\max\limits_{0 \leq s\leq t}(1+t-s)e^{-(t-s)}=1$. Then for any $p$ with $p\in[1,\infty]$, we have from \eqref{eq3a10}, Lemma
\ref{estimatez1}  and Lemma \ref{fundsolu1} with $N=1$ that
\begin{eqnarray}\label{add}
\begin{aligned}
\D &\int_0^t \|\Gamma(s,\cdot)\|_{L^{p}(\R)}
(\|\rho(t-s,\cdot)\|_{L^{1}(\R)}+\|z\rho(t-s,\cdot)\|_{L^1(\R)})ds\\
&\leq C\int_0^t
\|\Gamma(s,\cdot)\|_{L^{p}(\R)}[1+(1+t-s)e^{-(t-s)}]ds\\
&\leq C\int_0^t
\|\Gamma(s,\cdot)\|_{L^{p}(\R)}dx
 \leq \infty.
\end{aligned}
\end{eqnarray}
}
Moreover, it is straightforward to
verify from \eqref{Gamma} that
\begin{equation}\label{eq4a7}
\|\Gamma(t,\cdot)\|_{L^1(\R)}=\frac{1}{(4\pi t)^{1/2}} e^{- t}
\|e^{-\frac{x^2}{4t}}\|_{L^1(\R)} \leq C e^{- t} \leq C.
\end{equation}
Then the inequality (\ref{eq4a1}) follows from (\ref{eq4a6}),
(\ref{add}) and (\ref{eq4a7}).

\qed

{\color{black} In the above proof of Lemma \ref{1destimate1n}, we only use the boundedness of $g$  in the assumption \eqref{eq2a6}. By the global boundedness of $S$ shown in Lemma \ref{1destimate1n},  $g'$ is bounded due to $g \in C^1$ as assumed in \eqref{eq2a6}. Therefore hereafter we shall use the boundedness of $g'$ directly.}

Below we derive some a {\it priori} $L^{\infty}$-estimates
on the gradient $S_x$ and $S_t$, which play key roles in the subsequent analysis.

\begin{lemma}\label{1destimate1}
Let $f_0 \in L^1(\R\times V)$, $S_0 \in W^{1,\infty}(\R)$ and $z_0 \in L^{\infty}(\R)$. Let $(f,z,S)$ satisfy \eqref{eq2a7}-\eqref{eq2a9}.
Then there exists a constant $C_0$ depending on $\|z_0\|_{L^{\infty}(\R)}, \|\rho_0\|_{L^1\cap L^{\infty}(\R)}$ such that the solution $S$ of
(\ref{eq2a9}) satisfies
\begin{equation}\label{eq4a2}
\begin{array}{lll}
\D\|S_x(t)\|_{L^{\infty}(\R)} \leq C_0(1+(\ln t)_++(\ln \sup\limits_{0\leq s\leq t} \|\rho(s)\|_{L^2(\R)})_+).
\end{array}
\end{equation}
\end{lemma}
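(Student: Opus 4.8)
My plan is to differentiate the Duhamel representation \eqref{eq4a5} in $x$, leaving the derivative on the kernel (the nonlinear terms have no spatial regularity to spare), and to bound the four resulting contributions in $L^\infty(\R)$. Thus I write
\[
S_x(t)=\partial_x\Gamma(t)\ast S_0+\int_0^t\partial_x\Gamma(s)\ast\big(g(S)\rho-z\rho-S^2\big)(t-s)\,ds.
\]
The data term is controlled by transferring the derivative to $S_0$: $\|\partial_x\Gamma(t)\ast S_0\|_{L^\infty}=\|\Gamma(t)\ast S_0'\|_{L^\infty}\le\|\Gamma(t)\|_{L^1}\|S_0'\|_{L^\infty}\le C$ using \eqref{eq4a7} and $S_0\in W^{1,\infty}$. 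The quadratic term is bounded by Young's inequality as $\|\partial_x\Gamma(s)\ast S^2\|_{L^\infty}\le\|\partial_x\Gamma(s)\|_{L^1}\|S\|_{L^\infty}^2$; since $\|S\|_{L^\infty}\le C$ by Lemma \ref{1destimate1n} and $\int_0^\infty\|\partial_x\Gamma(s)\|_{L^1}\,ds<\infty$ by Lemma \ref{fundsolu1} (with $N=1$, $k=1$, $p=1$), this contributes a uniform constant.

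All the difficulty lies in the two density terms $g(S)\rho$ and $z\rho$, which I treat together as $(g(S)-z)\rho$. Because $g$ is bounded and $\|z\|_{L^\infty}\le C$ (Lemma \ref{estimatez}), this quantity is controlled both in $L^1$, uniformly in time, via $\|(g(S)-z)\rho\|_{L^1}\le C\|\rho\|_{L^1}=C\|\rho_0\|_{L^1}$ from \eqref{eq3a10}, and in $L^2$ via $\|(g(S)-z)\rho\|_{L^2}\le C\,\|\rho\|_{L^2}\le CR$, where $R:=\sup_{0\le\tau\le t}\|\rho(\tau)\|_{L^2}$; no $L^\infty$ control of $\rho$ is assumed. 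Pairing $\partial_x\Gamma$ with the $L^1$ bound forces the norm $\|\partial_x\Gamma(s)\|_{L^\infty}\le Cs^{-1}e^{-s}$ from \eqref{Gamma}, which is not integrable near $s=0$; pairing it instead with the $L^2$ bound uses the integrable $\|\partial_x\Gamma(s)\|_{L^2}\le Cs^{-3/4}e^{-s}$, but at the price of a factor $R$. Using the $L^2$ estimate on the whole interval would only yield the linear bound $CR$, which is far too strong to close the later Gronwall argument.

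The crux of the proof, and the step I expect to be the main obstacle, is to trade this linear dependence for a logarithmic one. I would split $\int_0^t=\int_0^\delta+\int_\delta^t$ at a free parameter $\delta\in(0,t]$, estimating the singular part by Young's inequality in $L^2\times L^2$,
\[
\int_0^\delta\|\partial_x\Gamma(s)\|_{L^2}\,\|(g(S)-z)\rho(t-s)\|_{L^2}\,ds\le CR\int_0^\delta s^{-3/4}\,ds\le CR\,\delta^{1/4},
\]
and the tail by $L^\infty\times L^1$,
\[
\int_\delta^t\|\partial_x\Gamma(s)\|_{L^\infty}\,\|(g(S)-z)\rho(t-s)\|_{L^1}\,ds\le C\int_\delta^t s^{-1}\,ds=C\ln(t/\delta).
\]
Choosing $\delta=\min\{t,R^{-4}\}$ makes the first bound $O(1)$ while the second becomes $C\ln(t/\delta)\le C\big(1+(\ln t)_++(\ln R)_+\big)$; assembling the three contributions gives \eqref{eq4a2}. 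The mechanism is specific to one dimension: it is precisely the integrability of $s^{-3/4}$ (so that the modest norm $\|\rho\|_{L^2}$ suffices near the singularity) that lets the optimization in $\delta$ produce only a logarithm of $R$, and this logarithmic—rather than polynomial—growth is exactly what will feed into the logarithmic Gronwall inequality of Lemma \ref{grownwall} when global existence is established.
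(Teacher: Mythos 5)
Your proof is correct, and it reaches the bound by exactly the same mechanism as the paper: write $S$ by Duhamel, isolate the density contribution $(g(S)-z)\rho$, split the time integral at a free parameter, pair the kernel with $\|(g(S)-z)\rho\|_{L^2}\le CR$ near the singularity (cost $R\,\delta^{1/4}$) and with $\|(g(S)-z)\rho\|_{L^1}\le C\|\rho_0\|_{L^1}$ away from it (cost $\ln(t/\delta)$), then optimize with $\delta=\min\{t,R^{-4}\}$. The only genuine difference is where the estimates are performed: the paper works on the Fourier side, using $\|S_x\|_{L^\infty}\le\|\xi\hat S\|_{L^1}$, H\"older in $\xi$, and Plancherel, with the kernel rates $\|\xi\hat\Gamma(s)\|_{L^2}\le Ce^{-s}s^{-3/4}$ and $\|\xi\hat\Gamma(s)\|_{L^1}\le Ce^{-s}s^{-1}$; you stay in physical space and use Young's convolution inequality with $\|\partial_x\Gamma(s)\|_{L^2}\le Ce^{-s}s^{-3/4}$ and $\|\partial_x\Gamma(s)\|_{L^\infty}\le Ce^{-s}s^{-1}$. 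By Plancherel these pairings are rate-for-rate equivalent, so nothing is lost; what your route buys is elementarity (no Fourier machinery at all, only Lemma \ref{fundsolu1} and \eqref{Gamma}), and incidentally a cleaner treatment of the quadratic term: you bound $\|\partial_x\Gamma(s)\ast S^2\|_{L^\infty}\le\|\partial_x\Gamma(s)\|_{L^1}\|S\|_{L^\infty}^2$ using only the $L^\infty$ bound of Lemma \ref{1destimate1n}, whereas the paper's estimate \eqref{eq4a15} invokes $\|S\|_{L^4}^2$ and hence $\|S_0\|_{L^4}$, which is not actually guaranteed finite under the hypothesis $S_0\in W^{1,\infty}(\R)$. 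What the paper's Fourier framework buys, in turn, is reusability: the same $\xi$-side calculus is deployed again in Lemma \ref{1destimate2}, where time derivatives of moments such as $\rho_t=-j_x$ are converted into multiplication by $i\xi$; in your physical-space setting the analogous step would be done by moving that $x$-derivative onto the kernel inside the convolution, which also works but must be argued separately.
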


{\it Proof}. First note that the solution $S(t,x)$ can be expressed by (\ref{eq4a5}). To derive the $L^{\infty}$ bound for $S_x(t,x)$, we first estimate $\|\xi\hat{S}(t,\xi)\|_{L^1(\R)}$ whereby \
$\hat{}$ \  denotes the Fourier transform with frequency $\xi$, and then use the Fourier transform inequality
$\|S_x(t,\cdot)\|_{L^{\infty}(\R)}\leq \|\hat{S}_x(t,\cdot)\|_{L^1(\R)} = \|\xi\hat{S}(t,\xi)\|_{L^1(\R)}$ to
obtain (\ref{eq4a2}).  To this end, we take the Fourier transform on both side of (\ref{eq4a5}) and get
\begin{eqnarray}\label{eq4a8}
\begin{array}{lll}
\hat{S}(t,\xi)&=&\D\int_0^t \hat{\Gamma}(s,\xi)\widehat{(g
\rho)}(t-s,\xi)ds-\int_0^t
\hat{\Gamma}(s,\xi)\widehat{(z\rho)}(t-s,\xi)ds\\
&&\D-\int_0^t
\hat{\Gamma}(s,\xi)\widehat{S^2}(t-s,\xi)ds+\widehat{\Gamma\ast{S}_0}\\
&=& \hat{S}_1+\hat{S}_2+\hat{S}_3+\hat{S}_4.
\end{array}
\end{eqnarray}
Noticing that  $\|\xi\hat{S}(t,\xi)\|_{L^1(\R)} =\|\xi(\hat{S}_1+\hat{S}_2+\hat{S}_3+\hat{S}_4)(t,\xi)\|_{L^1(\R)}\leq \sum_{i=1}^4\|\xi \hat{S}_i(t,\xi)\|_{L^1(\R)}$,
we next estimate $\|\xi \hat{S}_i(t,\xi)\|_{L^1(\R)}$ for $i=1,2,3,4$.
We first estimate $\|\xi \hat{S}_1(t,\xi)\|_{L^1(\R)}$ for which we apply the idea of \cite{HKS3}
and split the integral into two parts as follows
\begin{eqnarray}\label{eq4a9}
\|\xi \hat{S}_1(t,\cdot)\|_{L^1(\R)} &\leq& \D  \int_0^t
\|\xi\hat{\Gamma}(s,\xi)\widehat{(g
\rho)}(t-s,\xi)\|_{L^1(\R)}ds=\int_0^r \cdots +\int_r^t \cdots,
\end{eqnarray}
where $r$ is between $0$ and $t$ and will be appropriately determined later. Note that the Fourier transform of
green function $\Gamma$ is $\hat{\Gamma}(s,\xi)=\exp(-s(4 \xi^2+1))$. Then for $0
<s <r$, we apply H\"{o}lder inequality, Plancherel's identity and
boundedness of $g$ to deduce that
\begin{eqnarray}\label{eq4a10}
\begin{array}{lll}
&\D\int_0^r \|\xi\hat{\Gamma}(s,\xi)\widehat{(g
\rho)}(t-s,\xi)\|_{L^1(\R)}ds\\
&\leq \D  \int_0^r \|\xi\hat{\Gamma}(s,\xi)\|_{L^2(\R)}
\|\widehat{g
\rho}(t-s,\xi)\|_{L^2(\R)}ds\\
&\leq \D C \|g\|_{L^{\infty}(\R)} \sup\limits_{0\leq s\leq t}
\|\rho(s, \cdot)\|_{L^2(\R)} \int_0^r e^{-s}s^{-3/4}ds\\
&\leq \D C r^{1/4}\sup\limits_{0\leq s\leq t}
\|\rho(s,\cdot)\|_{L^2(\R)},
\end{array}
\end{eqnarray}
where we have used the inequality
\begin{eqnarray*}\label{eq4a16}
\begin{array}{lll}
\|\xi\hat{\Gamma}(s,\xi)\|_{L^2(\R)}=\|\xi\exp(-s(4 \xi^2+1))\|_{L^2(\R)}
\leq C e^{-s} s^{-3/4},
\end{array}
\end{eqnarray*}
which can be derived directly by the simple calculations. For
$0<r<s<t$, we use the mass conservation of $\rho$ from \eqref{eq3a9} and boundedness of
$g$ from \eqref{eq2a6} to infer that
\begin{eqnarray}\label{eq4a11}
\begin{array}{lll}
&\D\int_r^t\|\xi\hat{\Gamma}(s,\xi)\widehat{(g
\rho)}(t-s,\xi)\|_{L^1(\R)}ds \\
& \D \leq  \int_r^t
\|\xi\hat{\Gamma}(s,\xi)\|_{L^1(\R)} \|\widehat{g
\rho}(t-s,\xi)\|_{L^{\infty}(\R)}ds\\
&\leq \D C \|\rho_0\|_{L^1(\R)} \int_r^t e^{-s}s^{-1}ds\\
&\leq \D C \|\rho_0\|_{L^1(\R)}(\ln t-\ln r)
\end{array}
\end{eqnarray}
where we have used the fact that $\|\widehat{g\rho}\|_{L^{\infty}(\R)}\leq
\|g\rho\|_{L^1(\R)}\leq \|g\|_{L^{\infty}(\R)} \|\rho\|_{L^1(\R)} \leq C \|\rho\|_{L^1(\R)}$.
Then the combination of (\ref{eq4a10}) and (\ref{eq4a11}) leads  to
\begin{eqnarray}\label{eq4a12}
\|\xi \hat{S}_1(t,\cdot)\|_{L^1(\R)} &\leq& \D
C(r^{1/4}\sup\limits_{0\leq s\leq t}
\|\rho(s,\cdot)\|_{L^2(\R)}+\|\rho_0\|_{L^1(\R)}(\ln t-\ln r)).
\end{eqnarray}
Now choosing $r=\min\{t, (\sup\limits_{0\leq s\leq
t}\|\rho(s,\cdot)\|_{L^2(\R)})^{-4}\}$ in (\ref{eq4a12}), we
have
\begin{eqnarray}\label{eq4a13}
\|\xi \hat{S}_1(t,\cdot)\|_{L^1(\R)} &\leq& \D
C(1+\|\rho_0\|_{L^1(\R)}[1+(\ln t)_++(\ln \sup\limits_{0\leq s\leq t} \|\rho(s)\|_{L^2(\R)})_+])
\end{eqnarray}
Thus the $L^1$ estimate for $\xi \hat{S}_1(t,\xi)$ is completed.

Performing the same procedure as above (replacing $g$ by $z$), we have the $L^1$ bound for $\xi\hat{S}_2(t,\xi)$ as follows
\begin{eqnarray*}\label{eq4a14}
\begin{array}{lll}
\|\xi \hat{S}_2(t,\cdot)\|_{L^1(\R)} &\leq& \D C(1+(\ln
t)_++(\ln\sup\limits_{0\leq s\leq t} \|\rho(s)\|_{L^2(\R)})_+),
\end{array}
\end{eqnarray*}
where Lemma \ref{estimatez1} and Lemma \ref{estimatez} have been used.

Moreover, using the Plancherel's identity and inequality
(\ref{eq4a1}), we have
\begin{eqnarray}\label{eq4a15}
\begin{array}{lll}
\|\xi \hat{S}_3(t,\cdot)\|_{L^1(\R)} &\leq& \D \int_0^t
\|\xi\hat{\Gamma}(s,\xi)\|_{L^2(\R)}\|S^2(t-s,\xi)\|_{L^2(\R)}ds\\
&\leq & \D \int_0^t
\|\xi\hat{\Gamma}(s,\xi)\|_{L^2(\R)}\|S(t-s,\xi)\|_{L^4(\R)}^2ds\\[3mm]
&\leq & C(1+\|S_0\|_{L^4(\R)}+\|\rho_0\|_{L^{1}(\R)})^2
\end{array}
\end{eqnarray}
where we have used the fact $\int_0^t
\|\xi\hat{\Gamma}(s,\xi)\|_{L^2(\R)}\leq C\int_0^t e^{-s}s^{-3/4}ds<\infty$.
In addition, {  from \eqref{Gamma}, we have  $\|\Gamma(t,\cdot)\|_{L^1(\R)} \leq C$ for some constant $C>0$.} Then with the convolution inequality, the following holds
\begin{equation}\label{eq4a17}
\|\partial_x S_4\|_{L^{\infty}(\R)}=\|\Gamma \ast \partial_x
S_0\|_{L^{\infty}(\R)} \leq \|\Gamma(t,\cdot)\|_{L^1(\R)} \cdot \|
\partial_x S_0\|_{L^{\infty}(\R)}
\leq C \|\partial_x S_0\|_{L^{\infty}(\R)}.
\end{equation}
Finally, we derive the $L^{\infty}$ norm of $S_x$ as follows
\begin{eqnarray}\label{eq4a18}
\begin{array}{lll}
\D\|S_x\|_{L^{\infty}(\R)}
&\leq& \D\sum\limits_{i=1}^{3} \|\partial_x S_i(t)\|_{L^{\infty}(\R)}+\|\partial_x S_4(t)\|_{L^{\infty}(\R)}\\
&\leq& \D\sum\limits_{i=1}^{3} \|\widehat{\partial_x S_i}\|_{L^{1}(\R)}+\|\partial_x S_4(t)\|_{L^{\infty}(\R)}\\
&\leq& \D\sum\limits_{i=1}^{3} \|\xi
\hat{S}_i(t,\xi)\|_{L^1(\R)}+\|\partial_x S_4(t)\|_{L^{\infty}(\R)}.
\end{array}
\end{eqnarray}
Then the substitution of (\ref{eq4a13})-(\ref{eq4a17}) into (\ref{eq4a18}) gives inequality
(\ref{eq4a2}) and hence completes the proof of Lemma 4.1.

\qed

The next Lemma gives the {\it a priori} $L^{\infty}$-estimate  for the temporal gradient $S_t$.
\begin{lemma}\label{1destimate2}
Let $f_0 \in L^1\cap L^\infty(\R\times V)$, $S_0 \in W^{2,\infty}(\R)$ and $z_0 \in L^{\infty}(\R)$. Let $(f,z,S)$ satisfy \eqref{eq2a7}-\eqref{eq2a9}. Then the
solution $S$ of (\ref{eq2a9}) satisfies that
\begin{equation}\label{eq4a3}
\begin{array}{lll}
\D\|S_t(t)\|_{L^{\infty}(\R)} \leq C (1+(\ln t)_++(\ln \sup\limits_{0\leq s\leq t} \|\rho(s)\|_{L^2(\R)})_+).
\end{array}
\end{equation}
\end{lemma}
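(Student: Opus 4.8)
The plan is to mirror the Fourier strategy of Lemma~\ref{1destimate1}, estimating $\|S_t(t,\cdot)\|_{L^\infty(\R)}\le\|\widehat{S_t}(t,\cdot)\|_{L^1(\R)}$, but now with the temporal derivative playing the role that $\xi$ played for $S_x$. I would start from the Duhamel identity \eqref{eq4a5} and its Fourier form \eqref{eq4a8} and differentiate in $t$. Writing $A(\xi)=4\xi^2+1$ so that $\hat\Gamma(s,\xi)=e^{-sA(\xi)}$ and $\partial_s\hat\Gamma=-A\hat\Gamma$, the initial-data term $\hat S_4=\hat\Gamma(t)\hat S_0$ gives $\partial_t\hat S_4=-A\hat\Gamma(t)\hat S_0$, whose inverse transform is $\Gamma(t)\ast S_0''-\Gamma(t)\ast S_0$. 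By the convolution inequality and $\|\Gamma(t,\cdot)\|_{L^1(\R)}\le C$ (cf.\ \eqref{Gamma}, \eqref{eq4a7}) this is bounded by $C\|S_0\|_{W^{2,\infty}(\R)}$. This is exactly where the hypothesis $S_0\in W^{2,\infty}(\R)$ is needed, in contrast to the $W^{1,\infty}$ requirement of Lemma~\ref{1destimate1}.

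For the three reaction terms $\hat S_1,\hat S_2,\hat S_3$, carrying densities $\widehat{g\rho}$, $\widehat{z\rho}$ and $\widehat{S^2}$, differentiation in $t$ produces for each density $\hat H$ the expression $\hat H(t,\xi)-\int_0^t A(\xi)\hat\Gamma(s,\xi)\hat H(t-s,\xi)\,ds$. The device I would exploit is the elementary identity $\int_0^t A(\xi)\hat\Gamma(s,\xi)\,ds=1-\hat\Gamma(t,\xi)$, which recasts this as $\hat\Gamma(t)\hat H(t)+\int_0^t A\hat\Gamma(s)\,[\hat H(t)-\hat H(t-s)]\,ds$. The first, smoothed, piece is $\Gamma(t)\ast(g\rho-z\rho-S^2)(t)$ in physical space: its $S^2$-part is controlled by $\|\Gamma(t)\|_{L^1}\|S^2\|_{L^\infty}\le C$ via Lemma~\ref{1destimate1n}, and its reaction part by $\|\Gamma(t)\|_{L^2}\,(\|g\|_\infty+\|z\|_\infty)\,\|\rho(t)\|_{L^2}$ using Lemma~\ref{estimatez}. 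The remaining integral I would estimate by the same split-at-$r$ argument as in \eqref{eq4a9}--\eqref{eq4a13}: on $(0,r)$ pair the $L^2_\xi$-norm of the multiplier with the $L^2$-norm of the density (Plancherel together with $\sup_{0\le s\le t}\|\rho(s)\|_{L^2}$ and Lemmas~\ref{estimatez1}, \ref{estimatez}), and on $(r,t)$ pair its $L^1_\xi$-norm with $\|\hat H\|_{L^\infty}\le\|H\|_{L^1}\le C\|\rho\|_{L^1}$ via mass conservation \eqref{eq3a9}--\eqref{eq3a10}; choosing $r=\min\{t,(\sup_{0\le s\le t}\|\rho(s)\|_{L^2})^{-4}\}$ then reproduces the logarithmic terms $(\ln t)_+$ and $(\ln\sup_{0\le s\le t}\|\rho(s)\|_{L^2})_+$ of \eqref{eq4a3}, precisely as in Lemma~\ref{1destimate1}. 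The outcome then feeds, through the growth bound \eqref{kernel} on $T[S]$, into the logarithmic Gronwall Lemma~\ref{grownwall}.

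The conceptual heart of the difficulty, and the step I expect to be the main obstacle, is that in the parabolic scaling $\partial_t$ is as strong as $\partial_x^2$, so the naive estimate $\|S_t\|_\infty\le\|S_{xx}\|_\infty+\|S\|_\infty+\|F\|_\infty$ is useless on two counts: the term $\|F\|_\infty=\|(g(S)-z)\rho-S^2\|_\infty$ would require $\|\rho\|_{L^\infty}$, which is not available (only $\|\rho\|_{L^1}$ and $\sup\|\rho\|_{L^2}$ are), and the heat bound $\|\xi^2\hat\Gamma(s,\cdot)\|_{L^2(\R)}\sim s^{-5/4}$ is not integrable near $s=0$, so $\|S_{xx}\|_{L^\infty}$ simply cannot be closed from $L^2$-control of $\rho$. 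The content of the proof is therefore never to separate $S_{xx}$ from the reaction source but to keep the combination $S_t=S_{xx}-S+F$ intact; the Fourier identity $\widehat{S_t}=-A\hat S+\hat F$ together with the cancellation $\int_0^t A\hat\Gamma\,ds=1-\hat\Gamma(t)$ is exactly what exhibits the cancellation of the worst $s\to0$ singularity, turning the would-be second-derivative loss into a harmless heat-smoothed boundary contribution.

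A secondary technical point I would have to monitor is the behaviour of the smoothed boundary term $\Gamma(t)\ast\big((g(S)-z)\rho\big)(t)$ as $t\to0$, where $\|\Gamma(t)\|_{L^2}\sim t^{-1/4}$ forces a mild singularity; this is harmless for the global-existence application, since small times are covered by the local theory (where $\|\rho\|_{L^\infty}$ is finite, e.g.\ through $f_0\in L^\infty$), while for $t$ bounded away from $0$ it is absorbed into the constant $C$ of \eqref{eq4a3}. With these ingredients the bound \eqref{eq4a3} follows with the same logarithmic structure as \eqref{eq4a2}, which is all that is required to run Lemma~\ref{grownwall} and close the a priori estimates.
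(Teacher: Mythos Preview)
Your identity $\int_0^t A\hat\Gamma\,ds=1-\hat\Gamma(t)$ correctly absorbs the boundary contribution, but the remaining integral $\int_0^t A(\xi)\hat\Gamma(s,\xi)\,[\hat H(t,\xi)-\hat H(t-s,\xi)]\,ds$ still carries the multiplier $A\hat\Gamma$, whose $L^2_\xi$-norm behaves like $s^{-5/4}$ and whose $L^1_\xi$-norm like $s^{-3/2}$ near $s=0$. Neither is integrable, so the split-at-$r$ argument you propose cannot close as written: pairing with $\|\hat H(t)-\hat H(t-s)\|_{L^2}$ gives you nothing better than $2\sup_s\|H(s)\|_{L^2}$ unless you first extract a factor of $s^\alpha$ from the difference, and that requires time-regularity of $H=g(S)\rho,\ z\rho,\ S^2$ that you have not established.

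The paper closes this gap in a different way. It differentiates the equation for $S$ rather than the Duhamel formula, writing a parabolic equation for $\tilde v=S_t$ and applying Duhamel to that. The forcing is $\partial_t\big((g(S)-z)\rho\big)-2S\tilde v$, and the key step you are missing is to invoke the conservation laws $\rho_t=-j_x$ (from \eqref{eq3a9}) and $(z\rho)_t=-(zj)_x+\rho g(S)-z\rho$ (from \eqref{eq2a8}) to trade each time derivative of the density for a single spatial derivative of the flux. In Fourier variables this produces the multiplier $\xi\hat\Gamma(s,\xi)$, with $L^2_\xi$-norm $\sim s^{-3/4}$, which is integrable and to which the split-at-$r$ argument of Lemma~\ref{1destimate1} applies verbatim (this is the paper's handling of $M_1$ and the $(zj)_x$-part of $M_2$). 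The remaining pieces coming from $g'(S)S_t\rho$ and $2SS_t$ carry $\tilde v$ itself and are absorbed by an ordinary Gronwall step at the end; your proposal does not mention any such closure, but it is unavoidable once you unpack the time-dependence of $g(S)$ and $S^2$.
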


{\it Proof}. The idea used in the proof of Lemma \ref{1destimate1} will be
partially applied here. First we set $\tilde{v}=S_t$ and differentiate equation (\ref{eq2a9}) with respect to $t$ to obtain an equation for
$\tilde{v}$
\begin{equation*}
\frac{\partial \tilde{v}}{\partial t}=\Delta \tilde{v}-\tilde{v}+\frac{\partial}{\partial t} \mathcal{F}(S,z, \rho)-2S\tilde{v},
\end{equation*}
where $\mathcal{F}(S,z, \rho)=(g(S)-z)\rho$. Then by the Duhamel's principle we have
\begin{eqnarray}\label{expressiontildev}
\begin{aligned}
\tilde{v}(t,x)&=\Gamma(t,\cdot) \ast  \tilde{v}_0(\cdot)-2\int_0^t \Gamma(s,\cdot)\ast(S\tilde{v})(t-s)ds
+\int_0^t \Gamma(s,\cdot) \ast \mathcal{F}_t(t-s,\cdot)ds\\
&=I_1+I_2+I_3.
\end{aligned}
\end{eqnarray}
Next we  employ a similar argument as in the proof of
Lemma \ref{1destimate1} for $S_x$ to estimate $\tilde{v}$. First from the equation (\ref{eq2a9}), {  one} has that $S_t(0)=\tilde{v}_0=S_{0xx}+(g(S_0)-z_0) \rho_0-S_0-S_0^2$.
Then $\|\tilde{v}_0\|_{L^{\infty}(\R)} \leq
C(1+\|S_0\|_{W^{2,\infty}(\R)}
+\|\rho_0\|_{L^{\infty}(\R)}+\|z_0\rho_0\|_{L^{\infty}(\R)})$ by the boundedness of $g$. Therefore  the
convolution inequality yields the following estimate
\begin{eqnarray}\label{estimateI1}
\begin{aligned}
\|I_1\|_{{  L^\infty(\R)}} &\leq \|\Gamma(t)\|_{L^1(\R)} \|\tilde{v}_0\|_{L^{\infty}(\R)}\\
&\leq Ce^{-t}(1+\|S_0\|_{W^{2,\infty}(\R)}+\|\rho_0\|_{L^{\infty}(\R)}+\|z_0\rho_0\|_{L^{\infty}(\R)}),
\end{aligned}
\end{eqnarray}
where we have used the fact $\|\Gamma(t)\|_{L^1(\R)}=e^{-t}$ which is obtained in Lemma \ref{fundsolu1}.

For $I_2$, we use the convolution inequality again and (\ref{eq4a1}) to get
\begin{eqnarray}\label{estimateI2}
\begin{aligned}
\|I_2\|_{{  L^\infty(\R)}}  \leq \int_0^t \|\Gamma(s)\|_{L^1(\R)}\|S\|_{L^{\infty}(\R)} \|\tilde{v}\|_{L^{\infty}(\R)}ds
 \leq C \int_0^t e^{-s} \|\tilde{v}\|_{L^{\infty}(\R)}ds,
\end{aligned}
\end{eqnarray}
where $C$ depends on $\|S_0\|_{L^{\infty}(\R)},\|\rho_0\|_{L^{1}(\R)}$ which can be seen from (\ref{eq4a1}) directly.

Next we prove the boundedness of $I_3$ using the Fourier transform inequality $\|I_3\|_{L^{\infty}(\R)}
\leq \|\hat{I_3}\|_{L^1(\R)}$. To this end, we write $I_3$ in the form
\begin{eqnarray}\label{expressionI3}
\begin{aligned}
I_3&= \int_0^t \Gamma(s,\cdot)\ast (g(S) \rho_t(t-s,\cdot))ds-\int_0^t
\Gamma(s,\cdot)\ast(z \rho)_t(t-s,\cdot)ds \\
& \ \ +\int_0^t \Gamma(s,\cdot)\ast (g'(S)\rho \tilde{v}(t-s,\cdot))ds\\
&=M_1+M_2+M_3.
\end{aligned}
\end{eqnarray}
Since $\|I_3\|_{L^{\infty(\R)} }\leq
\sum_{i=1}^3\|M_i\|_{L^{\infty}(\R)} \leq
\sum_{i=1}^3\|\widehat{M_i}\|_{L^1(\R)}$, it suffices to estimate
$\|\widehat{M_i}\|_{L^1(\R)}$ for $i=1,2,3$. From the equation (\ref{eq2a7}), we have $\D \rho_t=-j_x$,
where $j(t,x)=\int_V v f(t,x,v)dv$ denoting the density flux. Then it
follows that
\begin{eqnarray}\label{eq4a21}
\begin{array}{lll}
\widehat{g \frac{\partial \rho}{\partial t}}(t-s,\xi)&=&-\widehat{g
\frac{\partial j}{\partial x}}(t-s,\xi)=-\hat{g}(t-s,\cdot) \ast
\widehat{\frac{\partial j}{\partial x}}(t-s,\cdot)\\
&=& -\hat{g}(t-s,\cdot) \ast (i \xi \hat{j}(t-s,\cdot))\\
&=& -i\xi\int_{\R} \hat{g}(t-s,\xi-y)\hat{j}(t-s,y)dy\\
&=& -i\xi (\hat{g}\ast \hat{j})(t-s,\xi)\\
&=&-i\xi \widehat{gj}(t-s,\xi).
\end{array}
\end{eqnarray}
In addition, from the Plancherel's identity, using the positivity of $f$, we can deduce that
\begin{eqnarray}\label{eq4a22}
\begin{array}{lll}
\|\widehat{j}(t)\|_{L^2(\R)} =\D\|{j}(t)\|_{L^2(\R)} \leq \bigg(
\int_{\R} \bigg(\int_{V}v
f(t,x,v)dv\bigg)^2dx\bigg)^{1/2}{  \leq} C(V) \|\rho(t)\|_{L^2(\R)}
\end{array}
\end{eqnarray}
where we have used the compactness of domain $V$.

Observing that
\begin{eqnarray}\label{eq4a23}
\begin{array}{lll}
\|\widehat{M}_1\|_{L^1(\R)}\leq  \D \int_0^t \|\xi
\widehat{\Gamma}(s,\xi) \widehat{(gj)}(t-s,\xi)\|_{L^1(\R)}ds,
\end{array}
\end{eqnarray}
we use exactly the same method as estimating (\ref{eq4a9}), and employ
H\"{o}lder inequality, Plancherel's inequality and the boundedness
of $g$ to (\ref{eq4a23}). After some calculations, we have
\begin{eqnarray}\label{estimateM1}
\begin{array}{lll}
\|M_1\|_{L^{\infty}(\R)}\leq \|\widehat{M}_1\|_{L^1(\R)} \leq
C(1+\|\rho_0\|_{L^1(\R)}(1+(\ln t)_++(\ln\sup\limits_{0\leq s\leq t}
\|\rho(s)\|_{L^2(\R)})_+)).
\end{array}
\end{eqnarray}
To estimate $M_2$, we first notice from the equation (\ref{eq2a8}) that
\begin{eqnarray*}\label{eq4a25}
\begin{array}{lll}
\D \widehat{\frac{\partial (z\rho)}{\partial t}}=\D -\widehat{\frac{\partial
(zj)}{\partial x}}+ \widehat{\rho g(S)}-\widehat{z\rho}
= i \xi \widehat{z j}(t,\xi)+\widehat{\rho
g(S)}(t,\xi)-\widehat{z\rho}(t,\xi).
\end{array}
\end{eqnarray*}
Then it follows  from the convolution identity involving the Fourier transform that
\begin{eqnarray}\label{eq4a26}
\begin{array}{lll}
\|\widehat{M}_2\|_{L^1(\R)} &\leq& \D \int_0^t \|\xi
\widehat{\Gamma}(s,\xi) \widehat{zj}(t-s,\xi)\|_{L^1(\R)}ds+\D
\int_0^t \|\widehat{\Gamma}(s,\xi) \widehat{\rho
g(S)}(t-s,\xi)\|_{L^1(\R)}ds \\
&& +\D \int_0^t \|\widehat{\Gamma}(s,\xi)
\widehat{z\rho}(t-s,\xi)\|_{L^1(\R)}ds.
\end{array}
\end{eqnarray}
Note that one has $\|j\|_{L^1(\R)} \leq C\|\rho\|_{L^1(\R)}\leq C \|\rho_0\|_{L^1(\R)}$ from \eqref{flux} and \eqref{eq3a9}. Furthermore
$\|\widehat{zj}\|_{L^{\infty}(\R)} \leq \|zj\|_{L^1(\R)}$. Then
applying inequality (\ref{eq3a7}), (\ref{eq3a14}) and
(\ref{eq4a22}), and using the same approach as estimating (\ref{eq4a9}),
we end up with the following inequality
\begin{eqnarray}\label{eq4a27}
\begin{aligned}
\D\int_0^t \|\xi \widehat{\Gamma}(s,\xi)&
\widehat{zj}(t-s,\xi)\|_{L^1(\R)}ds \\
\leq & C(1+(\ln t)_++(\ln \sup\limits_{0\leq s\leq t} \|\rho(s)\|_{L^2(\R)})_+).
\end{aligned}
\end{eqnarray}
Noticing that $\|\widehat{\Gamma}(t,\cdot)\|_{L^1(\R)} \leq C
t^{-1/2}e^{-t}$, one has
\begin{eqnarray}\label{eq4a28}
\begin{array}{lll}
\D \int_0^t \|\widehat{\Gamma}(s,\xi) \widehat{\rho
g(S)}(t-s,\xi)\|_{L^1(\R)}ds&\leq& \D \sup\limits_{0\leq s \leq
t}\|\widehat{g\rho}(t-s,\cdot)\|_{L^{\infty}(\R)} \int_0^t
\|\widehat{\Gamma}(\tau,\cdot)\|_{L^1(\R)}d\tau\\
&\leq& C \sup\limits_{0\leq s \leq
t}\|g\rho(t-s,\cdot)\|_{L^1(\R)}\\
&\leq& C \|\rho_0\|_{L^1(\R)}.
\end{array}
\end{eqnarray}
Similarly, we can deduce from (\ref{eq3a14}) that
\begin{eqnarray}\label{eq4a29}
\begin{array}{lll}
\D \int_0^t \|\widehat{\Gamma}(s,\xi) \widehat{z\rho
}(t-s,\xi)\|_{L^1(\R)}ds &\leq& C \|\rho_0\|_{L^1(\R)}.
\end{array}
\end{eqnarray}
Then the substitution of (\ref{eq4a27}), (\ref{eq4a28}) and
(\ref{eq4a29}) into (\ref{eq4a26}) yields
\begin{eqnarray}\label{estimateM2}
\begin{array}{lll}
\|M_2\|_{L^{\infty}(\R)} \leq \|\widehat{M}_2\|_{L^1(\R)}
 \leq
C(1+(\ln t)_++(\ln \sup\limits_{0\leq s\leq t} \|\rho(s)\|_{L^2(\R)})_+).
\end{array}
\end{eqnarray}
To finish the proof, it remains to estimate $M_3$ for which we have
\begin{eqnarray}\label{estimateM3}
\begin{array}{lll}
\|M_3\|_{L^{\infty}(\R)} &\leq& \D \int_0^t \|g'(S) \rho\|_{L^1(\R)}
\|\Gamma(s,\cdot)\|_{L^{\infty}(\R)} \|\tilde{v}\|_{L^{\infty(\R)}}ds\\
&\leq& C \D \|\rho_0\|_{L^1(\R)} \int_0^t
t^{-1/2}e^{-s}\|\tilde{v}\|_{L^{\infty}(\R)}ds.
\end{array}
\end{eqnarray}
Feeding (\ref{estimateM1}), (\ref{estimateM2}) and (\ref{estimateM3}) into (\ref{expressionI3}) and
combining the resulting inequality with (\ref{estimateI1}) and (\ref{estimateI2}),
we end up with the following inequality from (\ref{expressiontildev})
\begin{eqnarray*}\label{eq4a33}
\begin{aligned}
\D\|\tilde{v}\|_{L^{\infty}(\R)}
\leq C(1+(\ln t)_++(\ln \sup\limits_{0\leq s\leq t} \|\rho(s)\|_{L^2(\R)})_+)
+C \int_0^t
(e^{-s}+s^{-1/2}e^{-s})\|\tilde{v}\|_{L^{\infty}(\R)}ds,
\end{aligned}
\end{eqnarray*}
where $C>0$ is a constant depending on initial data.

Note that $\int_0^t(e^{-s}+s^{-1/2}e^{-s})ds \leq \int_0^{\infty} (e^{-s}+s^{-1/2}e^{-s})ds=1+\sqrt{\pi}/2$ and $\tilde{v}=S_t$. Then
the application of Gronwall's inequality into the above inequality gives (\ref{eq4a3}) due to $S_t=\tilde{v}$.
The proof of Lemma \ref{1destimate2} is completed.
\qed

\section{Global existence}
\setcounter{equation}{0}
\renewcommand{\theequation}{\thesection.\arabic{equation}}
The proof of global existence of solutions to (\ref{eq2a7})-(\ref{eq2a9}) consists of {  a local existence theorem} and the {\it a priori} estimates. We first prove the local existence of solutions.
\subsection{Local existence}

The local existence theorem is given below.
\begin{lemma}[Local existence]\label{local}
Let $f_0\in L^1\cap L^{\infty}(\R \times V), z_0 \in L^{\infty}(\R), S_0 \in W^{2,\infty}(\R)$. Let the hypothesis (H) hold. Then there exists a positive constant $T_0$ such that (\ref{eq2a7})-(\ref{eq2a9}) has a unique solution satisfying $f \in L^\infty([0, T_0); L^1\cap L^{\infty}(\R \times V))$, $S \in L^\infty([0, T_0); W^{2,\infty}(\R))$ with $z \in L^{\infty}([0,T_0);L^{\infty}(\R))$.
\end{lemma}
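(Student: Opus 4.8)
The plan is to build the local solution by a contraction-mapping (Banach fixed point) argument on a short time interval $[0,T_0]$. Introduce the Banach space
$$
Y_{T_0}=L^\infty([0,T_0];L^1\cap L^\infty(\R\times V))\times L^\infty([0,T_0];L^\infty(\R))\times \mathcal{S}_{T_0},
$$
where $\mathcal{S}_{T_0}=\{S:\ S\in L^\infty([0,T_0];W^{2,\infty}(\R)),\ S_t\in L^\infty([0,T_0];L^\infty(\R))\}$ keeps the temporal gradient $S_t$ in $L^\infty$, which is exactly the regularity needed to evaluate the turning kernel. Let $B_R\subset Y_{T_0}$ be the closed ball of radius $R$ about the time-independent triple built from $(f_0,z_0,S_0)$, with $R$ fixed by the size of the data. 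On $B_R$ define a solution operator $\Phi:(\bar f,\bar z,\bar S)\mapsto(f,z,S)$ in three steps. First, freeze the kernel at $T[\bar S]$; since $\bar S\in W^{1,\infty}$ and $\bar S_t\in L^\infty$, assumption (H) gives $0\le T[\bar S]\le C$ on $[0,T_0]$, so the gain/loss operator on the right of \eqref{eq2a7} is bounded on $L^1\cap L^\infty(\R\times V)$, and solving \eqref{eq2a7} along the characteristics $x-vt$ with a Duhamel/Neumann-series expansion yields a unique nonnegative $f$ with $L^1\cap L^\infty$ bounds. Second, form $\rho=\int_V f\,dv$ and $j$ from \eqref{flux}, and solve the transport equation \eqref{eq3a11} for $z$ along the characteristics of $j/\rho$ with source $g(\bar S)-z$ as in Lemma \ref{estimatez}; since $|j/\rho|\le C(V)$ and $j=0$ wherever $\rho=0$ (see the discussion preceding \eqref{eq3a11}) the characteristics are well defined and $z\in L^\infty$. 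Third, with $\rho z$ known, solve \eqref{eq2a9} for $S$ through the mild formula \eqref{eq4a5}: the bound $\|S\|_\infty$ follows from $\|\Gamma\|_{L^1}\le C$, and $\|S_x\|_\infty$ by placing one derivative on $\Gamma$ and using that $\int_0^t\|\partial_x\Gamma(s)\|_{L^1}ds\le C\sqrt t$ is finite (Lemma \ref{fundsolu1} with $k=1,N=1,p=1$) against the bounded source $g(S)-z\rho-S(1+S)$. To reach $W^{2,\infty}$ I would solve in parallel the companion mild equation \eqref{expressiontildev} for $\tilde v=S_t$ as in the proof of Lemma \ref{1destimate2}, where the awkward term $\rho_t=-j_x$ is handled by transferring the spatial derivative onto $\Gamma$ (so only $j\in L^\infty$, not $\rho_x$, is needed); this yields $\|S_t\|_\infty<\infty$, and then $S_{xx}=S_t-g(S)+z\rho+S(1+S)$ recovers $\|S_{xx}\|_\infty<\infty$, so $(f,z,S)\in Y_{T_0}$.

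Next I would check that $\Phi$ maps $B_R$ into itself for $T_0$ small, using the uniform bound on $T[\bar S]$ from (H), the boundedness of $g$ from \eqref{eq2a6}, and the smoothing and decay estimates for $\Gamma$ in \eqref{Gamma}: each of the three steps changes the relevant norm by an amount that vanishes as $T_0\to0$, so the image stays in $B_R$ once $T_0$ is small. I would then show $\Phi$ is a contraction. Feeding two inputs $(\bar f_i,\bar z_i,\bar S_i)$, $i=1,2$, into $\Phi$ and subtracting, the difference of the frozen kernels $T[\bar S_1]-T[\bar S_2]$ is controlled by the Lipschitz continuity of $T$ in (H) (together with the Lipschitz bound on $g$), while the differences of the three solution maps each acquire a factor that is small with $T_0$ — of order $T_0$ from the kinetic and transport parts, and of order $\sqrt{T_0}$ from the parabolic part through the time-integrability of $\|\partial_x\Gamma\|_{L^1}$ in Lemma \ref{fundsolu1}. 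Hence for $T_0$ small enough $\Phi$ is a contraction in a suitable (possibly weaker) norm, and the Banach fixed point theorem produces a unique fixed point, which is the desired local solution; nonnegativity is inherited from the positivity-preserving kinetic and transport steps and the parabolic comparison principle.

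The main obstacle is twofold, and both parts stem from the explicit dependence of $T[S]$ on the temporal gradient $S_t$, which forces the fixed-point space to control $S_t\in L^\infty$. First, one cannot obtain the required $W^{2,\infty}$ regularity of $S$ by direct parabolic smoothing in \eqref{eq4a5}, since $\int_0^t\|\partial_x^2\Gamma(s)\|_{L^1}ds$ diverges (the condition in Lemma \ref{fundsolu1} fails for $k=2,N=1,p=1$) and the source is only $L^\infty$; instead $S_{xx}$ must be recovered algebraically from \eqref{eq2a9} after the $L^\infty$-bound on $S_t$ is established through the companion equation \eqref{expressiontildev}. Second, that companion equation contains $\rho_t=-j_x$, and since no spatial regularity of $\rho$ is available, the derivative must be shifted onto the kernel $\Gamma$, which is admissible precisely because $\|\partial_x\Gamma\|_{L^1}$ is time-integrable by Lemma \ref{fundsolu1}. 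These two devices make the $S$-step, and the attendant Lipschitz estimate for $T[\bar S_1]-T[\bar S_2]$ (which now involves differences of $S_t$ alongside $S$ and $S_x$), the delicate core of the argument. A minor additional point is the well-posedness of the $z$-characteristics where $\rho$ vanishes, handled as in the remark preceding \eqref{eq3a11}.
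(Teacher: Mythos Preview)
Your proposal is correct and shares the overall scheme (Banach fixed point on a short time interval) with the paper, but the organization differs in two respects worth noting. First, the paper removes $z$ from the contraction entirely: since $g$ is bounded and $|j/\rho|\le C(V)$, Lemma~\ref{estimatez} gives $z\in L^\infty$ uniformly, so the paper runs the fixed point on a \emph{single} variable $\theta\in\mathcal X(T)=L^\infty([0,T);L^\infty(\R\times V))$ via the composition $\Phi=\phi_2\circ\phi_1$, where $\phi_1:\theta\mapsto S$ solves the semilinear heat problem \eqref{leqn5} (itself by a contraction $\mathcal T_1$ on $\mathcal Y(T)=L^\infty([0,T);W^{1,\infty})$) and $\phi_2:S\mapsto f$ solves \eqref{lenq1} along characteristics. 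Second, and more substantively, the paper does \emph{not} carry $S_t$ in the contraction norm: $S$ lives only in $W^{1,\infty}$, and after the fixed point for $\mathcal T_1$ is obtained the Fourier-based a~priori bounds of Lemmas~\ref{1destimate1}--\ref{1destimate2} are invoked to produce \eqref{pes}, which then controls $T[S]$ through~(H) in the $\phi_2$ step; $S_{xx}$ is recovered at the very end from the equation, exactly as you do.

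Your route---building $S_t$ into the fixed-point space and estimating it through the companion mild equation \eqref{expressiontildev}, with $\rho_t=-j_x$ absorbed by $\partial_x\Gamma$---is more self-contained: it needs only the elementary convolution bound $\int_0^t\|\partial_x\Gamma(s)\|_{L^1}\,ds\le C\sqrt t$ rather than the Fourier machinery, and it makes the Lipschitz estimate for $T[\bar S_1]-T[\bar S_2]$ (which, since $T$ depends on $S_t$, genuinely involves $\|\partial_t\bar S_1-\partial_t\bar S_2\|_{L^\infty}$) completely transparent. The price is a larger working space and a longer difference computation for the $S_t$-component; the paper's route is leaner but leans on the external a~priori input \eqref{pes}. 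Your hedge about contracting in a ``possibly weaker'' norm is unnecessary here: every piece of the difference estimate carries a prefactor of order $T_0$ or $\sqrt{T_0}$, so the contraction goes through in the full $Y_{T_0}$ norm.
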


\begin{proof}
 From the results given in Lemma \ref{estimatez}, the local solution $z$ can be obtained by the fixed point theorem directly irrespective of the properties of $f$ and $S$ due to the uniform boundedness of $g$ and $\frac{j}{\rho}$.
 Therefore we only consider the equations for $f$ and $S$. The proof consists of the following three steps. For convenience, we denote for some $T>0$
\begin{equation*}
\mathcal{X}(T)=L^\infty([0, T); L^{\infty}(\R \times V)),\ \ \|f\|_{\mathcal{X}}=\sup \limits_{0 \leq t\leq T} \|f(t, \cdot, \cdot)\|_{L^{\infty}(\R \times V)}
\end{equation*}
and
\begin{equation*}
\mathcal{Y}(T)=L^\infty([0, T); W^{1,\infty}(\R)), \ \ \|S\|_{\mathcal{Y}}=\sup \limits_{0 \leq t\leq T} \| S(t, \cdot)\|_{W{^{1,\infty}(\R)}}.
\end{equation*}

{\it Step 1}. Given a function $\theta\in \mathcal{X}(T)$ with $\varrho(t,x)=\int_V \theta(t,x,v)dv$,  we consider the following equation
\begin{eqnarray}\label{leqn5}
\begin{aligned}
S_t=S_{xx}-S+g(S)-S^2-z\varrho, \  \ S|_{t=0}=S_0.
\end{aligned}
\end{eqnarray}
For convenience, we denote $h(S)=g(S)-S^2$. Then by Duhamel principle, one can write the solution of (\ref{leqn5}) as
\begin{equation}\label{semi}
S(t,x)=\Gamma(t,\cdot)\ast S_0(\cdot)+\int_0^t \Gamma(\tau, \cdot)\ast [h(S(t-\tau, \cdot))-(z \varrho)(t-\tau, \cdot)]d\tau.
\end{equation}
Next we define an operator $\mathcal{T}_1: \mathcal{Y} \to \mathcal{Y}$ such that
\begin{equation*}\label{T1}
\mathcal{T}_1[S]=\Gamma(t,\cdot)\ast S_0(\cdot)+\int_0^t \Gamma(\tau, \cdot)\ast [h(S(t-\tau, \cdot))-(z \varrho)(t-\tau, \cdot)]d\tau.
\end{equation*}
and show that $\mathcal{T}_1$ has a unique fixed point. Indeed for any $S, \tilde{S} \in \mathcal{Y}$ with $S(0,\cdot)=\tilde{S}(0, \cdot)=S_0$,  we have
\begin{equation}\label{leqn5nn*}
\mathcal{T}_1[S]-\mathcal{T}_1[\tilde{S}]=\int_0^t \Gamma(\tau, \cdot)\ast[h(S(t-\tau, \cdot))-h(\tilde{S}(t-\tau, \cdot))]d\tau.
\end{equation}
By the convolution inequality, we have the following estimates
\begin{eqnarray}\label{leqn6}
\begin{aligned}
&\|\Gamma(\tau, \cdot) \ast \{h(S(t-\tau, \cdot))-h(\tilde{S}(t-\tau, \cdot))\}\|_{\mathcal{Y}}\\
&=\sup\limits_{0\leq \tau \leq t}\|(\Gamma(\tau, \cdot)+\partial_x \Gamma(\tau, \cdot))\ast [h(S(t-\tau, \cdot))-h(\tilde{S}(t-\tau, \cdot))]\|_{L^{\infty}(\R)}\\
& \leq \sup\limits_{0\leq \tau \leq t} \Big(\|\Gamma(\tau, \cdot)\|_{W^{1,1}(\R)}\|h(S(t-\tau, \cdot))-h(\tilde{S}(t-\tau, \cdot)\|_{L^{\infty}(\R)}\Big).
\end{aligned}
\end{eqnarray}
Note that
\begin{eqnarray}
\begin{aligned}
h(S)-h(\tilde{S})=g(S)-S^2-(g(\tilde{S})-\tilde{S}^2)=[g'(\xi)-(S+\tilde{S})](S-\tilde{S})
\end{aligned}
\end{eqnarray}
where $\xi$ is between $S$ and $\tilde{S}$. Therefore it follows from Lemma \ref{1destimate1n} and (\ref{eq2a6}) that
\begin{eqnarray}\label{leqn7}
\begin{aligned}
&\|h(S)-h(\tilde{S})\|_{L^{\infty}(\R)} \leq C\|S-\tilde{S}\|_{L^{\infty}(\R)}.
\end{aligned}
\end{eqnarray}
From \eqref{Gamma}, one has that {  $\|\Gamma(\tau, \cdot)\|_{W^{1,1}(\R)} \leq Ce^{-t}(1+t^{-\frac{1}{2}})$ and hence $\int_0^t\|\Gamma(\tau, \cdot)\|_{W^{1,1}(\R)}d\tau\leq C(1-e^{-t}+t^{\frac{1}{2}})$}. Then it follows from (\ref{leqn5nn*})-(\ref{leqn7}) that
\begin{eqnarray*}\label{leqn8}
\begin{aligned}
&\|\mathcal{T}_1[S]-\mathcal{T}_1[\tilde{S}]\|_{\mathcal{Y}}
&\leq C m(t)\|S-\tilde{S}\|_{\mathcal{Y}}
\end{aligned}
\end{eqnarray*}
where $m(t)=1-e^{-t}+t^{\frac{1}{2}}$. By choosing $T$ small with $0<t<T$ such that $C m(t)<1$, we conclude that $\mathcal{T}_1$ is a contraction mapping on $\mathcal{Y}(T)$ and hence has a unique fixed point $S \in \mathcal{Y}(T)$. This implies that the problem \eqref{leqn5} has a unique solution $S\in \mathcal{Y}(T)$ if $T$ is small, which further satisfies the following estimates in view of Lemma \ref{1destimate1} and Lemma \ref{1destimate2}
\begin{eqnarray}\label{pes}
\begin{aligned}
\|S_{x}(t)\|_{L^{\infty}(\mathbb{R})}+\|S_{t}(t)\|_{L^{\infty}(\mathbb{R})} &\leq C(1+(\ln t)_{+}+(\ln \sup _{0 \leq s \leq t}\|\varrho(s)\|_{L^{2}(\mathbb{R})})_+)\\
&\leq C(1+t+\sup _{0 \leq s \leq t}\|\varrho(s)\|_{L^2(\mathbb{R})}) \leq C(1+t)
\end{aligned}
\end{eqnarray}
where we have used the fact $(\ln s)_+\leq \ln(1+s)\leq s$ for $s\geq 0$ and $\|\varrho\|_{L^2(\R)} \leq C(V) \|\theta\|_{L^2(\R\times V)}$ by the H\"{o}lder inequality due to the definition $\varrho(t,x)=\int_V \theta(t,x,v)dv$ for $\theta \in \mathcal{X}(T)$.

Next we proceed to explore the continuous dependence of $S$ on $\theta$. For this purpose, we choose another $\bar{\theta}(t,x,v) \in \mathcal{X}(T)$ with $\tilde{\varrho}(t,x)=\int_V \bar{\theta}(t,x,v)dv$, and by $\tilde{S}$ we denote the solution of \eqref{leqn5} with $\varrho$ replaced by $\bar{\varrho}$. Then it follows from \eqref{leqn5} and \eqref{semi} that
\begin{equation}\label{leqn5n*}
S-\bar{S}=\int_0^t \Gamma(\tau, \cdot)\ast\{\hbar(S(t-\tau, \cdot), \varrho(t-\tau, \cdot))-\hbar(\bar{S}(t-\tau, \cdot), \bar{\varrho}(t-\tau, \cdot))\}d\tau
\end{equation}
where $\hbar(s,\varrho)=g(S)-S^2-z\varrho$.
The convolution inequality yields that
\begin{eqnarray}\label{leqn6*}
\begin{aligned}
&\|\Gamma(\tau, \cdot) \ast (\hbar(S, \varrho)-\hbar(\bar{S},\bar{\varrho}))\|_{\mathcal{Y}}\\
&=\sup\limits_{0\leq \tau \leq t}\|(\Gamma(\tau, \cdot)+\Gamma_x(\tau, \cdot))\ast (\hbar(S, \varrho)-\hbar(\bar{S},\bar{\varrho}))\|_{L^{\infty}(\R)}\\
& \leq \sup\limits_{0\leq \tau \leq t} \Big(\|\Gamma(\tau, \cdot)\|_{W^{1,1}(\R)}\|(\hbar(S, \varrho)-\hbar(\bar{S},\bar{\varrho}))\|_{L^{\infty}(\R)}\Big).
\end{aligned}
\end{eqnarray}
Note that
\begin{eqnarray}
\begin{aligned}
\hbar(S, \varrho)-\hbar(\bar{S},\bar{\varrho})&=(g(S)-z)\rho-S^2-(g(\bar{S})-z)\bar{\varrho}+\bar{S}^2\\
&=(g(S)-z)(\varrho-\bar{\varrho})+[g(S)-g(\bar{S})]\bar{\varrho}-(S+\bar{S})(S-\bar{S})\\
&=(g(S)-z)(\varrho-\bar{\varrho})+g'(\zeta)(S-\bar{S})\bar{\varrho}-(S+\bar{S})(S-\bar{S})\\
&=(g(S)-z)(\varrho-\bar{\varrho})-(S+\bar{S}-g'(\zeta)\bar{\varrho})(S-\bar{S})
\end{aligned}
\end{eqnarray}
where $\zeta$ is between $S$ and $\bar{S}$. Therefore it follows from Lemma \ref{estimatez}, Lemma \ref{1destimate1n} and (\ref{eq2a6}) that
\begin{eqnarray}\label{leqn7*}
\begin{aligned}
\|(\hbar(S, \varrho)-\hbar(\bar{S},\bar{\varrho}))\|_{L^{\infty}(\R)}
\leq C(1+e^{-t})(\|\varrho-\bar{\varrho}\|_{L^{\infty}(\R)}+\|S-\bar{S}\|_{L^{\infty}(\R)}).
\end{aligned}
\end{eqnarray}
Note again $\int_0^t\|\Gamma(\tau, \cdot)\|_{W^{1,1}(\R)}d\tau\leq C(1-e^{-t}+t^{\frac{1}{2}})$ from \eqref{Gamma}. Then we get from (\ref{leqn5n*})-(\ref{leqn7*})
\begin{eqnarray}\label{leqn8*}
\begin{aligned}
&\|S-\bar{S}\|_{\mathcal{Y}}
&\leq Cm(t)(\|\theta-\bar{\theta}\|_{\mathcal{X}}+\|S-\bar{S}\|_{\mathcal{Y}})
\end{aligned}
\end{eqnarray}
where $m(t)=1-e^{-t}+t^{\frac{1}{2}}$ as above and the inequality $\|\rho-\bar{\varrho}\|_{L^{\infty}(\R)}\leq C\|\theta-\bar{\theta}\|_{\mathcal{X}}$ has been used. Taking $T>0$ small enough with $0<t<T$ such that $C m(t)<1$, we have from \eqref{leqn8*} that
\begin{eqnarray}\label{leqn11*}
\begin{aligned}
\|S-\bar{S}\|_{\mathcal{Y}} \leq \frac{Cm(t)}{1-C m(t)}\|f-\tilde{f}\|_{\mathcal{X}}=: \ell_1(t)\|\theta-\bar{\theta}\|_{\mathcal{X}}.
\end{aligned}
\end{eqnarray}

{\it Step 2}. Let $S \in \mathcal{Y}(T)$ be the fixed point of \eqref{leqn5} satisfying \eqref{pes} obtained in Step 1. We consider the following problem
\begin{equation}\label{lenq1}
f_t+ v\cdot \nabla_x f=\int_V (T[S]f'-T^*[S]f)dv', \ \ f|_{t=0}=f_0.
\end{equation}
Using the backward characteristic starting at $(t,x)$, the characteristic curve for any $0 \leq \tau \leq  t$ is given by
\begin{equation*}
{X}(\tau;t,x)=x-v(t-\tau).
\end{equation*}
Then we can rewrite (\ref{lenq1}) along the characteristic curve as
\begin{eqnarray*}\label{leqn2}
\begin{aligned}
f(t,x,v)=f_0(X(0),v)+\int_0^t \int_V&\{T[S(\tau, X(\tau))]f(\tau, X(\tau),v')\\
& \ \ \  -T^*[S(\tau, X(\tau))]f(\tau, X(\tau),v)\}dv'd\tau.
\end{aligned}
\end{eqnarray*}
Next we define an operator $\mathcal{T}_2:\mathcal{X} \to \mathcal{X}$ such that
\begin{equation*}
\mathcal{T}_2[f]=f_0(X(0))+\int_0^t \int_V(T[w(\tau, X(\tau))]f(\tau, X(\tau),v')-T^*[w(\tau, X(\tau))]f(\tau, X(\tau),v))dv'd\tau.
\end{equation*}
Then for any $f, \tilde{f} \in {  \mathcal{X}(T)}$ with $f_0=\tilde{f}_0$, we have
\begin{eqnarray*}
\begin{aligned}
\mathcal{T}_2[f]-\mathcal{T}_2[\tilde{f}]&=\int_0^t \int_V T[S(\tau, X(\tau))](f(\tau, X(\tau),v')-\tilde{f}(\tau, X(\tau),v'))dv'd\tau\\
& \ \ -\int_0^t \int_V T^*[S(\tau, X(\tau))](f(\tau, X(\tau),v)-\tilde{f}(\tau, X(\tau),v))dv'd\tau.
\end{aligned}
\end{eqnarray*}
By the hypothesis (H) along with \eqref{pes}, one deduces for any $0<t\leq T$ that
\begin{eqnarray*}\label{leqn3}
\begin{aligned}
\|\mathcal{T}_2[f]-\mathcal{T}_2[\tilde{f}]\|_{\mathcal{X}}&\leq 2C_0(1+\sup\limits_{0\leq \tau \leq T} \|S(\tau)\|_{W^{1,\infty}(\R)}+\|S_{t}(t)\|_{L^{\infty}(\mathbb{R})}) \int_0^t\int_V \|f-\tilde{f}\|_{\mathcal{X}}dv'd\tau\\
& \leq 2C|V|t(1+t) \|f-\tilde{f}\|_{\mathcal{X}},
\end{aligned}
\end{eqnarray*}
where Lemma \ref{1destimate1n} has been used. Now let $T>0$ small with $0<t<T$ such that
$
0<t(1+t)<\frac{1}{2C|V|}.
$
Then $\mathcal{T}_2$ is a contraction mapping on $\mathcal{X}(T)$ and hence has a unique fixed point $f \in \mathcal{X}(T)$.

Now we investigate the continuous dependence of  $f$ on $S$. To this end, we replace $S$ by $\tilde{S}\in \mathcal{Y}(T)$ in (\ref{lenq1}) with $S_0=\tilde{S}_0$, which gives another unique solution $\tilde{f}\in \mathcal{X}(T)$ by the above argument. Then
\begin{eqnarray*}
\begin{aligned}
f-\tilde{f}=&\int_0^t\int_V [(T[S]-T[\tilde{S}])f'+(T^*[\tilde{S}]-T^*[S])f]dv'd\tau\\
&+\int_0^t\int_V(T[\tilde{S}](f'-\tilde{f}')-T^*[\tilde{S}](f-\tilde{f}))dv'd\tau.
\end{aligned}
\end{eqnarray*}
From the assumption (H) and \eqref{pes}, it follows that
\begin{eqnarray*}
\begin{aligned}
\|f-\tilde{f}\|_{\mathcal{X}} \leq 2L|V| t  (\|f\|_{\mathcal{X}}+\|\tilde{f}\|_{\mathcal{X}})  \|S-\tilde{S}\|_{\mathcal{Y}}+2C|V|t(1+t+\|S\|_{\mathcal{Y}}+\|\tilde{S}\|_{\mathcal{Y}})\|f-\tilde{f}\|_{\mathcal{X}},
\end{aligned}
\end{eqnarray*}
where $L$ is a Lipschitz constant. 
If we let $T>0$ small enough with $0<t<T$ such that $t(1+t+\|S\|_{\mathcal{Y}}+\|\tilde{S}\|_{\mathcal{Y}}) <\frac{1}{2|V|}$, then
\begin{equation}\label{leqn9}
\|f-\tilde{f}\|_{\mathcal{X}} \leq \frac{2L|V|t(\|f\|_{\mathcal{X}}+\|\tilde{f}\|_{\mathcal{X}})}{1-2|V|t (1+t+\|S\|_{\mathcal{Y}}+\|\tilde{S}\|_{\mathcal{Y}})}\|w-\tilde{w}\|_{\mathcal{Y}}=: \ell_2(t)\|S-\tilde{S}\|_{\mathcal{Y}}.
\end{equation}

{\it Step 3}. From the procedures shown above, we get a map $\phi_1: \mathcal{X}(T) \to \mathcal{Y}(T)$ in Step 1 and another map $\phi_2:\mathcal{Y}(T) \to \mathcal{X}(T)$ for small $T>0$ in Step 2. Now we define a map $\Phi=\phi_2 \circ \phi_1: \mathcal{X}(T) \to \mathcal{X}(T)$ such that $\Phi(\theta)=f$.  Given two functions $\theta_1, \theta_2 \in \mathcal{X}$, we denote $S_i=\phi_1(\theta_i)$ and $f_1=\phi_2(S_i)$ for $i=1,2$. Then using  \eqref{leqn9} and  \eqref{leqn11*}, we get
\begin{eqnarray*}
\begin{aligned}
\|\Phi(\theta_1)- \Phi(\theta_2)\|_{\mathcal{X}} \leq \|\phi_2(S_1)-\phi_2(S_2)\|_{\mathcal{X}} &\leq \|f_1-f_2\|_{\mathcal{X}}\leq \ell_1(t) \ell_2(t)\|\theta_1-\theta_2\|_{\mathcal{X}}
\end{aligned}
\end{eqnarray*}
where $\ell_1(t)$ are $\ell_2(t)$ can be made arbitrarily small if $t$ is small enough. Hence for {  $T>0$} sufficiently small with $0<t<T$, $\Phi$ is a contraction mapping and hence has a unique fixed point  which gives to a unique solution $f\in \mathcal{X}(T)$. Furthermore we can get $f\in L^1(\R \times V)$ directly by integrating \eqref{eq2a7}. Using this $f$, we get a unique solution $S\in \mathcal{Y}(T)$ from the $S$-equation as shown in Step 1. To finish the proof, it remains only to show $S_{xx}\in L^\infty([0,T); L^\infty(\R))$. Indeed from Lemma \ref{1destimate2}, one has $S_t \in L^\infty([0,T); L^\infty(\R))$. Note that \eqref{eq2a9} gives  $S_{xx}=S_t-g(S)+z\rho+S(1+S)$, which along with the fact that $S_t, g, S$ and $z\rho$ are bounded for any $t\in [0, T)$ shown above entails that $S_{xx}\in L^\infty([0,T); L^\infty(\R))$. This completes the proof of Lemma \ref{local}.
\end{proof}

\subsection{Proof of Theorem \ref{mth}}
By the continuity argument, to obtain the global existence, it suffices to
derive {\it a priori} bound for the solution. Note that
(\ref{eq4a35}) is a direct consequence of (\ref{eq3a14}). Then it remains to show
(\ref{eq4a34}) and (\ref{eq4a36}). To this end, we integrate (\ref{eq2a7}) along the
characteristic curve and use the assumption (\ref{eq2a6}) along with the hypothesis (H) to
have that
\begin{eqnarray*}
\begin{array}{lll}
f(t,x,v)&=&f_0(x-vt,v)-\D \int_0^t\int_V T^*[S(t-\tau,x-v\tau)]f(t-\tau,x-v\tau,v)dv'd\tau\\
&& \ \ \ \ \  \D+\int_0^t\int_V T[S(t-\tau,x-v\tau)]{  f}(t-\tau,x-v\tau,v')dv'd\tau \\
&\leq& f_0(x-vt,v)+\D\int_0^t\int_V T[S(t-\tau,x-v\tau)](t-\tau,x-v\tau,v')dv'd\tau.
\end{array}
\end{eqnarray*}
Then using the hypothesis (H), we have
\begin{eqnarray}\label{eq4a37}
\begin{array}{lll}
f(t,x,v)&\leq& f_0(x-vt,v) +C \D \int_0^t
\bigg\{\Big(1+|S(t-\tau,x-v\tau)|+|S_x(t-\tau,x-v\tau)|\\
&& \ \ \ \ \  \D+|S_t(t-\tau,x-v\tau)|\Big)\int_V
f(t-\tau,x-v\tau,v')dv'\bigg\}d\tau.
\end{array}
\end{eqnarray}
We take the $p$-th power of (\ref{eq4a37}) and integrate the result with
respect to $x$ and $v$. Then using the compactness of $V$, we end up with
\begin{eqnarray}\label{eq4a38}
\begin{array}{lll}
\|f(t)\|_{L^p(\R \times V)}  &\leq& \D \|f_0\|_{L^p(\R \times V)}\\
&&\ \ \ \  +C
\D\int_0^t (1+\|S(\tau)\|_{W^{1,{\infty}}(\R \times V)}+\|S_t\|_{L^\infty(\R)})
\|f(\tau)\|_{L^p(\R \times V)} d\tau.
\end{array}
\end{eqnarray}
Using Lemma \ref{1destimate1} and Lemma \ref{1destimate2}, and taking $p=2$, we get for all $t \geq 0$
\begin{eqnarray}\label{eq4a39}
\begin{aligned}
\|f(t)\|_{L^2(\R \times V)}  &\leq \D \|f_0\|_{L^2(\R \times V)} +C
\int_0^t \|f(\tau)\|_{L^2(\R \times V)}d\tau \\
&\D \ \ \  +C \int_0^t (1+(\ln \tau)_++ (\ln \sup\limits_{0\leq s \leq \tau} \|\rho(s)\|_{L^2(\R
\times V)})_+)\cdot\|f(\tau)\|_{L^2(\R \times V)} d\tau\\
& \leq \D \|f_0\|_{L^2(\R \times V)} +C
\int_0^t\|f(\tau)\|_{L^2(\R \times V)}d\tau \\
&\D \ \ \ +C \int_0^t a(\tau) (\ln \sup\limits_{0\leq s \leq \tau} \|f(s)\|_{L^2(\R
\times V)})_+\|f(\tau)\|_{L^2(\R \times V)} d\tau
\end{aligned}
\end{eqnarray}
where the fact $\|\rho(t)\|_{L^2(\R \times V)} \leq C(V) \|f(t)\|_{L^2(\R
\times V)}$ has been used and $a(\tau)=1+(\ln \tau)_+$.

Setting
\begin{equation*}
y(\tau)= \sup\limits_{0\leq s \leq \tau} \|f(s)\|_{L^2(\R
\times V)}
\end{equation*}
we obtain from (\ref{eq4a39}) that
\begin{eqnarray*}
\begin{aligned}
y(t) \leq &\|f_0\|_{L^2(\R \times V)} +C \int_0^t [a(\tau) y(\tau)(\ln y(\tau))_++y(\tau)]d\tau.
\end{aligned}
\end{eqnarray*}
Then applying the Gronwall's inequality in Lemma \ref{grownwall}, we obtain
\begin{eqnarray}\label{eq4a40}
\begin{array}{lll}
\|f(t)\|_{L^2(\R \times V)} \leq C_1(t)
\end{array}
\end{eqnarray}
where $C_1(t)=\big[(1+\|f_0\|_{L^2(\R \times V)}) \exp\big(\int_0^t [1+a(\tau)]
e^{-\int_0^{\tau}a(s)ds}d\tau\big)\big]^{\exp(\int_0^t a(s)ds)}$ is bounded for any $0<t\leq T<\infty$. It is evident
from (\ref{eq4a2}) and (\ref{eq4a40}) that there is another constant $C_2(t)$ bounded for any $0<t \leq T<\infty$ such that
\begin{eqnarray}\label{eq4a41}
\begin{array}{lll}
\|S_x(t)\|_{L^{\infty}(\R)} \leq
C_2(t)
\end{array}
\end{eqnarray}
which, along with Lemma \ref{1destimate1n}, indicates that $S\in L^{\infty}([0, T]; W^{1,\infty}(\R))$ for any $0<t\leq T<\infty$. Then  applying (\ref{eq4a40}) and (\ref{eq4a41}) into (\ref{eq4a38}) with the Gronwall's inequality, we obtain the following inequality
\begin{eqnarray}\label{eq4a42}
\begin{array}{lll}
\|f(t)\|_{L^{p}(\R \times V)} \leq C_3(t), \ \ 1 \leq
p\leq \infty
\end{array}
\end{eqnarray}
which implies (\ref{eq4a34}). Furthermore the application of Lemma \ref{1destimate2} indicates that
$$S_t \in L^{\infty}([0, T]; L^{\infty}(\R)).$$
By the standard argument of temporal regularity for parabolic equations (e.g., see \cite{RJC}), we have $S \in C([0,T];L^{\infty}(\R))$.

Next we derive $ S_x \in L^p(\R)$ for any $1 \leq p< \infty$ and any $t>0$. In fact from (\ref{eq4a5}), it holds that
\begin{eqnarray*}\label{eq4a44}
\begin{array}{lll}
\|S_x(t)\|_{L^p(\R)} &\leq& \D \| \Gamma_x(t, \cdot) \ast S_0\|_{L^p(\R)}+\int_0^t \|\Gamma_x(s)
\ast (g(S)-z) \rho\|_{L^p(\R)}ds\\
&&\D+\int_0^t \|\Gamma_x(s) \ast
S^2\|_{L^p(\R)}ds\\
&\leq& \D \|\Gamma_x(t, \cdot)\|_{L^1(\R)} \|S_0\|_{L^p(\R)}+
\int_0^t \|\Gamma_x(s)\|_{L^p(\R)}
\|\rho\|_{L^1(\R)}ds\\
&&\D+\int_0^t \|\Gamma_x(s)\|_{L^p(\R)}
\|S\|_{L^2(\R)}^2ds.
\end{array}
\end{eqnarray*}
Then $\|S_x(t)\|_{L^p(\R)}<\infty$ for $1 \leq p< \infty$ and any $t>0$
due to Lemma \ref{fundsolu1}, (\ref{eq3a10}) and (\ref{eq4a1}).

To finish the proof, it remains to derive the $L^{\infty}$ estimates for $\partial_{x}^2S$. Indeed using the equation (\ref{eq2a9}), the second spatial
derivative of external signal can be expressed as
\begin{equation*}\label{eq4a43}
S_{xx}={\cred S_t}-g(S)+{\cred z\rho}+S(1+S).
\end{equation*}
Then it is easy to see that $S_{xx} \in L^{\infty}([0, T]; L^{\infty}(\R))$ for any $0<T<\infty$ by (\ref{eq4a1}), (\ref{eq4a3}), (\ref{eq4a40}) and
(\ref{eq4a42}). Combining the above results with Lemma \ref{estimatez}, we finish the proof of Theorem \ref{mth}.

\qed

\section{Hydrodynamic Limits}
\setcounter{equation}{0}
\renewcommand{\theequation}{\thesection.\arabic{equation}}
The model (\ref{eq2a7})-(\ref{eq2a9}) studied in the present paper includes
the internal state $z$, which {\cred impacts} the turning kernel $T[S]$ implicitly
through {\cred affecting} the chemical signal $S$.  In this section, we shall extend the hydrodynamic limit of the model
to a more general turning kernel than that in \cite{DS}.

We start with reformulating the equation (\ref{eq2a7}) as follows
\begin{equation}\label{eq5a1}
\partial_t f+v\cdot \nabla_x f=Q(f),
\end{equation}
where
\begin{eqnarray*}\label{operator}
 Q(f)=\D-\lambda[S](v)f(v)+\int_VT[S]f(v')dv', \ \lambda[S](v)=\D\int_V T^*[S](v',v)dv'
\end{eqnarray*}
where we have used the abbreviated notation $f(v):=f(t,x,v)$ and $f(v'):=f(t,x,v')$.

We assume that the turning rate $\lambda[S](v)$ has a lower bound $\lambda_1$ and an upper bound $\lambda_2$:
\begin{equation}\label{assumptions3}
0<\lambda_1\leq \lambda[S](v) \leq \lambda_2.
\end{equation}
We remark  that the assumption (\ref{assumptions3}) is more general than (\ref{eq1a18}) made in \cite{DS}.
Particularly in our assumption, the turning kernel $T[S]$ can be a Dirac delta function of velocity  $v$ which is unbounded.
This extension is of importance in applications. For instance, in case of mesenchymal
motion \cite{hilmesenchymal, HWH}, cell motion is highly guided by fibre
orientation, and the fibre distribution is a Dirac delta distribution when fibres are totally aligned. For bacteria motion using a ``run-and-tumble'' strategy based on a
velocity-jump process, the jumps are
instantaneous and consequently the turning kernel $T$ {  can be} given by a Dirac
distribution (see \cite{EO3}). In such scenario,  measurable solutions may be considered  \cite{HWH}. However the aim of this section is to carry out the hyperbolic limits under the generalized assumption (\ref{assumptions3}) with an idea of \cite{Poupaud1} where the crucial element is to study the invertibility of the operator $Q$ in a suitable space $L_{\lambda}^1(V)$ - a weighted $L^1$ space defined by
\begin{equation*}
L_{\lambda}^1(V)=\bigg\{f: \int_V|f(v)|\lambda[S](v)dv<\infty \bigg\}.
\end{equation*}
Noticing that $L^1(V)=L_{\lambda}^1(V)$ under the assumption (\ref{assumptions3}), we have the following results for the null space $N(Q)$ of the operator $Q$ (see Theorem 1 in \cite{Poupaud1}).
\begin{lemma}\label{null}
The following two conclusions have only one true

(1) $N(Q)=\{0\}$;

(2) There is a unique positive function $F(v)$ such that $Q(F)=0, \ \int_V F(v)dv=1$ and $N(Q)=\{\alpha F: \alpha \in \R \}$.

\end{lemma}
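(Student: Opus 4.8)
The statement is a Fredholm-type dichotomy combined with a Perron--Frobenius (Krein--Rutman) positivity principle for the linear jump operator $Q$, and it is in fact precisely Theorem~1 of \cite{Poupaud1}. The plan is therefore either to verify the hypotheses of that theorem (nonnegativity of $T[S]$ together with the two-sided bound \eqref{assumptions3}) and quote it, or to reproduce its argument, which I now outline. The guiding observation is the conservation identity $\int_V Q(f)(v)\,dv=0$, valid for every $f$, which follows directly from the definition $\lambda[S](v)=\int_V T^*[S](v',v)\,dv'$: after using $T^*[S](v',v)=T[S](v,v')$ and relabelling, this is the same as $\int_V T[S](v',v)\,dv=\lambda[S](v')$.

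First I would split $Q=-\lambda[S]\,I+K$ with $(Kf)(v)=\int_V T[S](v',v)f(v')\,dv'$, noting that $K$ is a positive operator and that, because $0<\lambda_1\le\lambda[S]\le\lambda_2$ by \eqref{assumptions3}, multiplication by $\lambda[S]^{\pm1}$ is an isomorphism of $L^1(V)=L_\lambda^1(V)$. Then $Qf=0$ is equivalent to $\lambda[S]f=Kf$, and the substitution $g=\lambda[S]f$ turns this into the fixed-point equation $g=\tilde Kg$, where $(\tilde Kg)(v)=\int_V T[S](v',v)\lambda[S]^{-1}(v')g(v')\,dv'$. Using the conservation identity one checks that $\int_V \tilde Kg\,dv=\int_V g\,dv$ for $g\ge 0$, so $\tilde K$ is a positive, integral-preserving $L^1$-contraction, i.e.\ a Markov operator, and $N(Q)$ is isomorphic to its fixed-point set.

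The dichotomy then reads: either $\tilde K$ has no nonzero fixed point in $L^1(V)$, which yields $N(Q)=\{0\}$, or it has one. In the second case I would first show a positive fixed point exists: if $\tilde Kg=g$ with $g\ne 0$, then $|g|=|\tilde Kg|\le \tilde K|g|$ pointwise, and integrating forces $\tilde K|g|=|g|$, so $|g|\ge 0$ is itself a fixed point. Normalising and transporting back by $F=\lambda[S]^{-1}|g|\big/\int_V\lambda[S]^{-1}|g|\,dv$ produces a positive $F$ with $Q(F)=0$ and $\int_V F\,dv=1$. To obtain that $N(Q)$ is exactly one-dimensional I would argue by irreducibility: given two normalised positive fixed points, their difference is a sign-changing fixed point with zero integral, which a positivity-improving property of $\tilde K$ rules out, forcing proportionality and hence $N(Q)=\{\alpha F:\alpha\in\R\}$.

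The main obstacle is this last step together with the \emph{existence} half, because the paper explicitly allows $T[S]$ to be a Dirac distribution in $v$; then $K$, and hence $\tilde K$, is \emph{not} compact, so one cannot invoke the usual Riesz--Schauder/Fredholm spectral theory. It is exactly here that the alternative genuinely splits: a singular invariant measure of the underlying velocity-jump process leaves no $L^1$ fixed point (case~1), whereas an absolutely continuous one gives the unique positive generator (case~2). Establishing the dichotomy for such non-compact positive operators, using only the order structure and the bounds \eqref{assumptions3}, is the substance of \cite[Theorem~1]{Poupaud1}, so the most economical route here is to verify $T[S]\ge 0$ and \eqref{assumptions3} and cite it directly.
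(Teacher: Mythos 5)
Your proposal takes essentially the same route as the paper: the paper offers no proof of Lemma \ref{null} at all, but simply quotes it as Theorem 1 of \cite{Poupaud1}, valid under the nonnegativity of the kernel and the two-sided bound \eqref{assumptions3} --- exactly the citation you ultimately recommend after verifying those hypotheses. Your supplementary sketch (the conservation identity, the reduction to a Markov fixed-point problem, and the honest caveat that compactness fails for Dirac-type kernels so the dichotomy must come from Poupaud's positive-operator argument rather than Riesz--Schauder theory) goes beyond what the paper records but is consistent with it.
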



\qed

The second conclusion of Lemma \ref{null} implies that the kernel of
the turning operator $Q$ is one dimensional and spanned by the
equilibrium distribution $F(v)$. Furthermore, we have the following
result for the invertibility of $Q$ (see \cite[Theorem 2]{Poupaud1}).

\begin{lemma}\label{inverse}
For any function $\phi \in L^1(V)$, the problem $Q(f)=\phi$ has a solution if and only if $\int_V \phi(v){  dv}=0$ and the solution is unique in $L^{1,0}(V)$, where
\begin{equation*}
L^{1,0}(V)=\bigg\{f \in L^1(V): \int_Vf(v)dv=0\bigg\}.
\end{equation*}
\end{lemma}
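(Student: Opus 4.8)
The plan is to treat $Q(f)=\phi$ as a Fredholm problem, exploiting the splitting of $Q$ into a boundedly invertible multiplication part and a positive integral part, and then to identify the solvability condition with $\int_V\phi\,dv=0$ through a duality argument. First I would establish the \emph{necessity} of $\int_V \phi\,dv = 0$. Integrating $Q(f)$ over $V$ and applying Fubini's theorem to the gain term gives $\int_V\int_V T[S](v',v)f(v')\,dv'\,dv = \int_V f(v')\big(\int_V T[S](v',v)\,dv\big)\,dv' = \int_V \lambda[S](v')f(v')\,dv'$, where I use that $\int_V T[S](v',v)\,dv = \lambda[S](v')$ by the very definition of $\lambda[S]$. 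This cancels the loss term, so $\int_V Q(f)\,dv=0$ for every $f\in L^1(V)$; equivalently, the constant $1$ lies in the kernel of the formal adjoint $Q^\ast$. Hence solvability of $Q(f)=\phi$ forces $\int_V\phi\,dv=0$.

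For sufficiency and uniqueness I would reformulate the equation. Using the lower bound $\lambda[S]\ge\lambda_1>0$ from (\ref{assumptions3}), the multiplication operator $M f = \lambda[S]f$ is boundedly invertible on $L^1(V)=L^1_\lambda(V)$, so $Q(f)=\phi$ is equivalent to $(I-\mathcal{L})f = -M^{-1}\phi$, where $\mathcal{L}f(v)=\lambda[S](v)^{-1}\int_V T[S](v',v)f(v')\,dv'$ is a positive integral operator. The crucial structural facts are that $\mathcal{L}$ preserves the weighted mass (for $f\ge0$ one has $\|\mathcal{L}f\|_{L^1_\lambda}=\|f\|_{L^1_\lambda}$, by the same Fubini computation as above), so $\mathcal{L}$ has spectral radius $1$, and that $N(I-\mathcal{L})=N(Q)$, which by Lemma \ref{null} is one-dimensional and spanned by the positive equilibrium $F$. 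A direct computation also shows that the weight $\lambda[S]$ spans the kernel of the adjoint $\mathcal{L}^\ast$, since $\int_V \lambda[S](v)^{-1}T[S](v',v)\lambda[S](v)\,dv = \lambda[S](v')$.

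The solvability then follows from the Fredholm alternative: the range of $I-\mathcal{L}$ consists of exactly those data annihilated by $N(I-\mathcal{L}^\ast)=\mathrm{span}\{\lambda[S]\}$, and the pairing $\langle -M^{-1}\phi,\lambda[S]\rangle = -\int_V \phi\,dv$ vanishes precisely when $\int_V\phi\,dv=0$. This produces a solution $f$, unique up to addition of elements of $N(Q)=\mathrm{span}\{F\}$; imposing the normalization $\int_V f\,dv=0$ and using $\int_V F\,dv=1\neq0$ fixes the free constant uniquely, yielding the unique solution in $L^{1,0}(V)$. I expect the main obstacle to be the justification of the Fredholm alternative itself, namely the compactness (or the appropriate Riesz--Schauder structure) of $\mathcal{L}$ on $L^1(V)$; this is delicate because under (\ref{assumptions3}) the kernel $T[S]$ need not be regular and may even be a Dirac mass in $v$, so one must invoke the finer functional-analytic argument of \cite{Poupaud1} rather than a naive compact-embedding estimate.
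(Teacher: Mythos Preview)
The paper does not actually prove this lemma: it is stated as a direct citation of \cite[Theorem 2]{Poupaud1}, with no argument given. Your proposal is therefore already more detailed than what the paper itself provides, and your outline is sound. The necessity computation is correct (with the definition $\lambda[S](v)=\int_V T[S](v,v')\,dv'$ from the paper, one indeed gets $\int_V T[S](v',v)\,dv=\lambda[S](v')$, so $\int_V Q(f)\,dv=0$), and the reformulation $(I-\mathcal{L})f=-M^{-1}\phi$ with the mass-preserving positive operator $\mathcal{L}$ is the right structural setup. You correctly flag the genuine obstacle: the Fredholm alternative on $L^1(V)$ is not automatic when $T[S]$ may be a Dirac mass, so compactness of $\mathcal{L}$ can fail, and one must appeal to the specific argument in \cite{Poupaud1} (which proceeds via a more hands-on analysis of the resolvent and positivity rather than abstract Riesz--Schauder theory). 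Since both you and the paper ultimately defer the technical core to \cite{Poupaud1}, your proposal is consistent with the paper's treatment; the honest caveat you include about the compactness issue is exactly the point where a self-contained proof would require real work.
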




We now substitute  the hyperbolic scaling
$\bar{x}=\varepsilon x,\ \bar{t}=\varepsilon t$
into (\ref{eq5a1}), where $\varepsilon$ is a small parameter. Dropping the bars for  convenience, we obtain
\begin{equation}\label{hyperbolic}
\varepsilon \partial_t f_{\varepsilon}+\varepsilon v \cdot \nabla_x f_{\varepsilon}=Q(f_{\varepsilon}),
\end{equation}
with initial data
\begin{equation}\label{initial}
f_{\varepsilon}(0,x,v)=f_I(x,v).
\end{equation}
For simplicity, we assume here that $Q(f_I(x,v))=0$ to avoid a problem of initial layers (see \cite{DS} for the discussion on the initial layer problem).

We expand $f_{\varepsilon}$ in terms of $\varepsilon: f_{\varepsilon}=f^0+\varepsilon f^1+\cdots$ satisfying
\begin{equation*}
f^0(t=0)=f_I.
\end{equation*}
Substituting this expansion into (\ref{hyperbolic}) and equating the same order term of $\varepsilon$, we find
\begin{eqnarray}
\varepsilon^0 &:& Q(f^0)=0\label{zero}\\
\varepsilon^1 &:& \partial_t f^0+v\cdot \nabla_x f^0=Q(f^1). \label{first}
\end{eqnarray}
Note that we look for nonzero leading order term. Then from (\ref{zero}) and Lemma \ref{null}, we deduce that $f^0(t,x,v)=\rho^0(t,x)F(t,x,v)$ with $\rho^0=\int_V f^0(v)dv$. By equation (\ref{first}) and Lemma \ref{inverse}, we get
\begin{equation}\label{limit}
\frac{\partial \rho^0}{\partial t}+\nabla \cdot(\sigma \rho^0)=0, \ \ \sigma=\int_VvF(v)dv
\end{equation}
satisfying the initial condition
\begin{equation}\label{initial2}
\rho^0(t=0)=\int_V f_I(v)dv.
\end{equation}

Then we have the following result analogous to the one in \cite{DS} but with weaker assumption than \eqref{eq1a18} made in \cite{DS}.
\begin{thm}
Let assumption (\ref{assumptions3}) hold. Let $f_{\varepsilon}$ and $\rho^0$ be the solutions of problem (\ref{hyperbolic})-(\ref{initial}) and (\ref{limit})-(\ref{initial2}) for any $(x,v)\in \R^N\times V (N\geq 1)$, respectively. Let $F$ be the equilibrium distribution spanning the kernel of the turning operator $Q$. Then $f_{\varepsilon}(t,x,v) \to \rho^0(t,x)F(t,x,v)$ as $\varepsilon \to 0$ such that for any $0\leq t \leq T$ it holds that
\begin{equation*}
\|f_{\varepsilon}(t,\cdot,\cdot)- \rho^0(t,\cdot)F(t,\cdot,\cdot)\|_{L^1(\R^N\times V)} \leq C_T \varepsilon,
\end{equation*}
where $C_T$ is a constant depending on $T$.
\end{thm}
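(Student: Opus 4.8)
The plan is to carry out a rigorous Hilbert (Chapman--Enskog) expansion with a controlled remainder, closing the argument by an $L^1$ contraction estimate for the turning operator $Q$. Continuing the formal computation already displayed, I would fix the leading term $f^0=\rho^0 F$ determined by \eqref{zero} and Lemma \ref{null}, and define the first corrector $f^1$ as the unique solution in $L^{1,0}(V)$ of
\begin{equation*}
Q(f^1)=\partial_t f^0+v\cdot\nabla_x f^0,
\end{equation*}
which is solvable by Lemma \ref{inverse} precisely because the solvability condition $\int_V(\partial_t f^0+v\cdot\nabla_x f^0)\,dv=\partial_t\rho^0+\nabla_x\cdot(\sigma\rho^0)=0$ is exactly the limit equation \eqref{limit}. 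Setting the remainder $R_\varepsilon=f_\varepsilon-f^0-\varepsilon f^1$ and using the linearity of $Q$ in its argument together with $Q(f^0)=0$ and the defining relation for $f^1$, a short computation starting from \eqref{hyperbolic} produces the remainder equation
\begin{equation*}
\varepsilon\partial_t R_\varepsilon+\varepsilon v\cdot\nabla_x R_\varepsilon-Q(R_\varepsilon)=-\varepsilon^2\big(\partial_t f^1+v\cdot\nabla_x f^1\big).
\end{equation*}

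The core of the proof is the $L^1$ dissipativity of $Q$. Writing $Q(g)(v)=\int_V T[S](v',v)g(v')\,dv'-\lambda[S](v)g(v)$ and using the identity $\int_V T[S](v',v)\,dv=\lambda[S](v')$, a Kato-type inequality yields
\begin{equation*}
\int_V Q(g)(v)\,\mathrm{sgn}(g(v))\,dv\le\int_V\lambda[S](v')|g(v')|\,dv'-\int_V\lambda[S](v)|g(v)|\,dv=0.
\end{equation*}
Multiplying the remainder equation by $\mathrm{sgn}(R_\varepsilon)$ and integrating over $\R^N\times V$, the transport term vanishes (it is a spatial divergence integrating to zero under $L^1$ decay), the collision term carries the favourable sign just established, and the source is bounded by $\varepsilon^2\|\partial_t f^1+v\cdot\nabla_x f^1\|_{L^1(\R^N\times V)}$. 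After dividing by $\varepsilon$ this gives
\begin{equation*}
\frac{d}{dt}\|R_\varepsilon(t)\|_{L^1(\R^N\times V)}\le C\varepsilon,
\end{equation*}
with $C$ independent of $\varepsilon$. For the initial datum I would invoke the no--initial--layer hypothesis $Q(f_I)=0$: by Lemma \ref{null} this forces $f_I=\rho^0(0)F(0,\cdot,\cdot)=f^0(0)$, where $\rho^0(0)=\int_V f_I\,dv$ by \eqref{initial2}, so that $R_\varepsilon(0)=-\varepsilon f^1(0)$ and $\|R_\varepsilon(0)\|_{L^1}\le C\varepsilon$. Integrating the differential inequality over $[0,T]$ then yields $\|R_\varepsilon(t)\|_{L^1}\le C_T\varepsilon$, and the triangle inequality $\|f_\varepsilon-\rho^0 F\|_{L^1}\le\|R_\varepsilon\|_{L^1}+\varepsilon\|f^1\|_{L^1}$ delivers the claimed rate. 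The well-posedness of $f_\varepsilon$ and its uniform $L^1$ bound, needed to legitimize these manipulations, follow from Theorem \ref{mth} after the hyperbolic rescaling.

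The main obstacle I anticipate is not the contraction estimate but verifying that the corrector $f^1$, and hence the source $\partial_t f^1+v\cdot\nabla_x f^1$, lies in $L^1(\R^N\times V)$ uniformly on $[0,T]$. Because $Q=Q_{t,x}$ depends on $(t,x)$ through $S(t,x)$, both the equilibrium $F(t,x,v)$ and $f^1=Q^{-1}(\partial_t f^0+v\cdot\nabla_x f^0)$ inherit their space--time regularity from $S$; establishing enough smoothness of $F$ and of $\rho^0$ in $(t,x)$ --- which rests on the parabolic regularity of $S$ supplied by Theorem \ref{mth} together with the well-posedness of the limit transport equation \eqref{limit} --- is the delicate technical point, and it is also where the hypothesis \eqref{assumptions3} enters to guarantee $L^1(V)=L^1_\lambda(V)$ and the boundedness of $\sigma$. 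A secondary care point is the rigorous justification of the sign--function (entropy) computation, which is standard but should be carried out via a regularization of $\mathrm{sgn}$ and a passage to the limit.
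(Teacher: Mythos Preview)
Your proposal is correct and follows the same overall architecture as the paper: a Hilbert expansion $f_\varepsilon=f^0+\varepsilon f^1+R_\varepsilon$, with $f^0=\rho^0 F$ fixed by Lemma~\ref{null}, $f^1$ defined through Lemma~\ref{inverse} (the solvability condition being exactly \eqref{limit}), and an $O(\varepsilon)$ bound on the remainder. The paper likewise identifies the regularity of $f^1$ and of $\partial_t f^1+v\cdot\nabla_x f^1$ in $L^\infty_t L^1_{x,v}$ as the outstanding technical point and defers it to \cite[Section~5]{Poupaud1}, just as you flag.

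The one genuine difference is how the remainder is controlled. The paper integrates the equation for $r_\varepsilon$ along characteristics, keeps the full collision term $\tfrac{1}{\varepsilon}Q(f_\varepsilon)$ on the right-hand side, and invokes $\int_V Q(f_\varepsilon)\,dv=0$ together with compactness of $V$ to make that term disappear upon $L^1$ integration; the argument is only sketched and appeals to \cite{Poupaud1} for details. You instead isolate $Q(R_\varepsilon)$ on the left and use the $L^1$ dissipativity of $Q$ via the Kato inequality $\int_V Q(g)\,\mathrm{sgn}(g)\,dv\le 0$. Your route is cleaner and more self-contained: it makes explicit where the sign of the collision operator is used, it does not require tracking characteristics, and it gives a differential inequality that integrates immediately. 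The paper's route is more elementary in that it avoids the entropy computation, but its sketch leaves the handling of the singular $\tfrac{1}{\varepsilon}Q$ term opaque. One small caveat: you invoke Theorem~\ref{mth} for the well-posedness of $f_\varepsilon$, but that result is one-dimensional and concerns the fully coupled system, whereas the present statement is in $\R^N$ with $S$ effectively given; the well-posedness you need here is just that of a linear transport equation with bounded coefficients, which is standard and independent of Theorem~\ref{mth}.
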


{\it Proof}. The proof is in the same spirit of  \cite{Poupaud1} and hence will be sketched only. We first derive from equations (\ref{limit}) and (\ref{initial2}) that
\begin{equation*}
\partial_t f^0 +v \cdot \nabla_x f^0 \in L^{1,0}(V).
\end{equation*}
Noting that $f_I=\rho^0(t=0)F(t=0)$ and $f^0(t=0)=f_I$, we derive that the residue $r_{\varepsilon}=f_{\varepsilon}-f^0-\varepsilon f^1$ is the solution of the problem
\begin{eqnarray}\label{remainder}
\begin{aligned}
\partial_t r_{\varepsilon}+v \cdot \nabla_x r_{\varepsilon}&=\frac{1}{\varepsilon} Q(f_{\varepsilon})-\varepsilon (\partial_t f^1+v\cdot \nabla_x f^1),\\
r_{\varepsilon}(0)&=\varepsilon f^1(0).
\end{aligned}
\end{eqnarray}
Integrating (\ref{remainder}) along the characteristic curve, we
get
\begin{eqnarray*}
\begin{aligned}
r_{\varepsilon}(t,x,v) = & r_{\varepsilon}(0, x-vt,v)-\frac{1}{\varepsilon}\int_0^t Q(f_{\varepsilon}(t-s,x-vs,v))ds\\
&-\varepsilon \int_0^t (\partial_t f^1(t-s,x-vs,v)+v \cdot \nabla_xf^1(t-s,x-vs,v))ds.
\end{aligned}
\end{eqnarray*}
Since $V$ is compact in $\R^N$ and $\int_V Q(f_{\varepsilon})dv=0$, we derive from above inequality that
\begin{eqnarray*}
\begin{aligned}
\|r_{\varepsilon}(t,\cdot,\cdot)\|_{L^1(\R^N\times V)} = &\varepsilon \|f^1(0,\cdot,\cdot)\|_{L^1(\R^N\times V)}\\
&+\varepsilon \int_0^t \|\partial_t f^1+v \cdot \nabla_xf^1(t-s,\cdot,\cdot)\|_{L^1(\R^N\times V)}ds.
\end{aligned}
\end{eqnarray*}
It then follows that
\begin{eqnarray*}
\|r_{\varepsilon}(t,\cdot,\cdot)-f^0(t,\cdot,\cdot)\|_{L^1(\R^N\times V)} \leq (C_1+C_2T)\varepsilon,
\end{eqnarray*}
where
\begin{eqnarray*}
\begin{aligned}
C_1&=\|f^1(0,\cdot,\cdot)\|_{L^1(\R^N\times V)}+\sup\limits_{0\leq t\leq T}\|f^1(t,\cdot,\cdot)\|_{L^1(\R^N\times V)},\\
C_2&= \sup\limits_{0\leq t\leq T} \|\partial_t f^1+v \cdot \nabla_xf^1(t-s,\cdot,\cdot)\|_{L^1(\R^N\times V)}.
\end{aligned}
\end{eqnarray*}
To finish the proof, it remains to show that
\begin{equation*}
f^1, \partial_t f^1+v\cdot \nabla_x f^1\in L^{\infty}([0,T); L^1(\R^N\times V)).
\end{equation*}
which was shown in \cite[section 5]{Poupaud1}. Hence we omit the details and complete the proof.
\qed

\bigbreak
\noindent \textbf{Acknowledgment.}
This research is supported by the Hong Kong Research Grant Council General Research Fund No. PolyU 153055/18P (Project ID: P0005472).


\begin{thebibliography}{10}

\bibitem{Alt1}
W.~Alt.
\newblock Biased random walk model for chemotaxis and related diffusion
  approximation.
\newblock {\em J. Math.\ Biol.}, 9:147--177, 1980.

\bibitem{BSB}
S.~Block, J.~Segall, and H.~Berg.
\newblock Adaption kinetics in bacterial chemotaxis.
\newblock {\em J. \ Bacteriology}, 154:312--323, 1983.

\bibitem{CMPS}
F.~Chalub, P.A. Markowich, B.~Perthame, and C.~Schmeiser.
\newblock Kinetic models for chemotaxis and their drift-diffusion limits.
\newblock {\em Monatsh. Math.}, 142:123--141, 2004.

\bibitem{DS}
Y.~Dolak and C.~Schmeiser.
\newblock Kinetic models for chemotaxis: Hydrodynamic limits and
  spatio-temporal mechanics.
\newblock {\em J.\ Math.\ Biol.}, 51:595--615, 2005.

\bibitem{ENV}
C.~Emako, L.~Neves De~Almeida, and N.~Vauchelet.
\newblock Existence and diffusive limit of a two-species kinetic model of
  chemotaxis.
\newblock {\em Kinetic and Related Models}, 8:359--380, 2015.

\bibitem{EH}
R.~Erban and H.J. Hwang.
\newblock Global existence results for complex hyperbolic models of bacterial
  chemotaxis.
\newblock {\em Discrete \ Contin. \ Dyn. \ Syst. \ Ser. \ B.}, 6:1239--1260,
  2006.

\bibitem{EO1}
R.~Erban and H.~Othmer.
\newblock From individual to collective behavior in bacterial chemotaxis.
\newblock {\em SIAM J.\ Appl. \ Math.}, 65(2):361--391, 2004.

\bibitem{EO2}
R.~Erban and H.~Othmer.
\newblock From signal transduction to spatial pattern formation in { $E$. {\it
  coli}}: a paradigm for multiscale modeling in biology.
\newblock {\em Multiscale \ Model. \ Simul.}, 3(3):362--394, 2005.

\bibitem{EO3}
R.~Erban and H.~Othmer.
\newblock Taxis equations for amoeboid cells.
\newblock {\em J.\ Math. \ Biol.}, 54:847--885, 2007.

\bibitem{Ford:1991:MBR2}
R.M. Ford and D.A. Lauffenburger.
\newblock Measurement of bacterial random motility and chemotaxis coefficients:
  {II}. application of single cell based mathematical model.
\newblock {\em Biotechnol. Bioeng.}, 37:661--672, 1991.

\bibitem{hilmesenchymal}
T.~Hillen.
\newblock ${M}^5$ mesoscopic and macroscopic models for mesenchymal motion.
\newblock {\em J.\ Math.\ Biol.}, 53:585--616, 2006.

\bibitem{HWH}
T.~Hillen, P.~Hinow, and Z.A. Wang.
\newblock Mathematical analysis of a kinetic model for cell movement in network
  tissues.
\newblock {\em Discrete Contin. Dyn. Syst. Ser. B}, 14(3):1055--1080, 2010.

\bibitem{HKS2}
H.J. Hwang, K.~Kang, and A.~Stevens.
\newblock Drift-diffusion limits of kinetic models for chemotaxis: a
  generalization.
\newblock {\em Discrete Contin.\ Dyn.\ Syst.\ Ser.\ B.}, 5:319--334, 2005.

\bibitem{HKS1}
H.J. Hwang, K.~Kang, and A.~Stevens.
\newblock Global solutions of nonlinear transport equations for chemosensitive
  movement.
\newblock {\em SIAM J. \ Math. \ Anal.}, 36:1177--1199, 2005.

\bibitem{HKS3}
H.J. Hwang, K.~Kang, and A.~Stevens.
\newblock Global existence of classical solutions for a hyperbolic chemotaxis
  model and its parabolic limit.
\newblock {\em Indiana Univ.\ Math.\ J.}, 55:289--316, 2006.

\bibitem{JV}
F.~James and N.~Vauchelet.
\newblock Chemotaxis : from kinetic equations to aggregate dynamics.
\newblock {\em NoDEA Nonlinear Differential Equations Appl.}, 20(1):101--127,
  2013.

\bibitem{KS1}
E.F. Keller and L.A. Segel.
\newblock Initiation of slime mold aggregation viewd as an instability.
\newblock {\em J.\ Theor. \ Biol.}, 26:399--415, 1970.

\bibitem{Liao}
J.~Liao.
\newblock Global solution for a kinetic chemotaxis model with internal dynamics
  and its fast adaptation limit.
\newblock {\em J. Differential Equations}, 259:6432--6458, 2015.

\bibitem{ODA}
H.~Othmer, S.R. Dunbar, and W.~Alt.
\newblock Models of dispersal in biological systems.
\newblock {\em J. Math.\ Biol.}, 26:263--298, 1988.

\bibitem{OH}
H.~Othmer and T.~Hillen.
\newblock The diffusion limit of transport equations {II}: Chemotaxis
  equations.
\newblock {\em SIAM J.\ Appl.\ Math.}, 62(4):1122--1250, 2002.

\bibitem{Patlak}
C.~S. Patlak.
\newblock Random walk with persistence and external bias.
\newblock {\em Bull.\ Math.\ Biophys.}, 15:311--338, 1953.

\bibitem{PTV}
B.~Perthame, M.~Tang, and N.~Vauchelet.
\newblock Derivation of the bacterial run-and-tumble kinetic equation from a
  model with biochemical pathway.
\newblock {\em J. Math. Biol.}, 73:1161--1178, 2016.

\bibitem{PVW}
B.~Perthame, N.~Vauchelet, and Z.A. Wang.
\newblock The flux limited {K}eller-{S}egel system; properties and derivation
  from kinetic equations.
\newblock {\em Rev. Mat. Iberoam.}, 36(2):357--386, 2020.

\bibitem{Poupaud1}
F.~Poupaud.
\newblock Runaway phenomena and fluid approximation under high fields in
  semiconductor kinetic theory.
\newblock {\em Z. Angew. Math. Mech.}, 72(8):359--372, 1992.

\bibitem{RJC}
J.C. Robinson.
\newblock {\em Infinite-dimensional dynamical systems}.
\newblock Cambridge texts in applied mathematics. Cambridge university press,
  Cambridge, 2001.

\bibitem{SCB-PNAS}
J.~Saragosti, V.~Calvez, N.~Bournaveas, B.~Perthame, A.~Buguin, and
  P~Silberzan.
\newblock Directional persistence of chemotactic bacteria in a traveling
  concentration wave.
\newblock {\em PNAS}, 108:16235--16240, 2011.

\bibitem{STY}
G.~Si, M.~Tang, and X.~Yang.
\newblock A pathway-based mean-field model for {E}. coli chemotaxis:
  Mathematical derivation and its hyperbolic and parabolic limits.
\newblock {\em Multiscale Model. Simul.}, 12(2):907926, 2014.

\bibitem{Stroock1974}
D.W. Stroock.
\newblock Some stochastic processes which arise from a model of the motion of a
  bacterium.
\newblock {\em Probab. Theory and Related Fields,}, 28(4):305--315, 1974.

\bibitem{NV1}
N.~Vauchelet.
\newblock Numerical simulation of a kinetic model for chemotaxis.
\newblock {\em Kinet. Relat. Models}, 3(3):501--528, 2010.

\bibitem{Xue1}
C.~Xue and H.G. Othmer.
\newblock Multiscale models of taxis-driven patterning in bacterial
  populations.
\newblock {\em SIAM J. Appl. Math.}, 70:133--167, 2009.

\end{thebibliography}
\end{document}